\newtheorem{theore}{Theorem}[section]
\newtheorem{corollar}[theore]{Corollary}
\newtheorem{lemm}[theore]{Lemma}
\theoremstyle{remark}
\newtheorem{exampl}{Example}[section]
\newtheorem{remar}[theore]{Remark}
\newtheorem{assumptio}{Assumption}[section]
\newcommand{\jump}{\mathcal{P}}
\newcommand{\Leb}{\mathrm{Leb}}
\newcommand{\A}{\mathcal{A}}
\newcommand{\B}{\mathcal{B}}
\newcommand{\G}{\mathcal{G}}
\newcommand{\BS}{\mathfrak{X}}
\newcommand{\Ker}{\mathrm{ker}}
\newcommand{\Tr}{\mathrm{Tr}}
\newcommand{\x}{\mathrm{x}}
\newcommand{\z}{\mathrm{v}}
\begin{document}

\title{Transport equations and perturbations of boundary conditions}

\author{Marta Tyran-Kami\'nska}
\thanks{This research was supported by the Polish NCN grant  2017/27/B/ST1/00100.}

\address{Institute of Mathematics, University of Silesia in Katowice, Bankowa 14, 40-007
Katowice, Poland}
\email{mtyran@us.edu.pl}

\begin{abstract}
We provide a new perturbation theorem for substochastic semigroups on abstract AL spaces extending Kato's perturbation theorem to non-densely defined operators. We show how it can be applied to piecewise deterministic Markov processes and transport equations with abstract boundary conditions. We give particular examples to illustrate our results.
\end{abstract}

\keywords{initial-boundary value problem, Markov process, resolvent positive operator, substochastic semigroup, gene expression with bursting, linear Boltzmann equation}
\subjclass[2010]{35F46, 47D06, 60J25, 82C40, 82C70, 92C40}

\maketitle

\section{Introduction}

Stochastic models in natural sciences involving deterministic motion or growth and random jumps are particular examples of piecewise deterministic Markov processes (PDMPs) as introduced by Davis \cite{davis84}, see \cite{rudnickityran17}. These are processes whose sample paths (trajectories) are deterministic on random intervals $(\tau_n,\tau_{n+1})$, where $\tau_n$ is an increasing sequence of positive random variables, called jump times. The process is described with the help of three characteristics $(\phi,q,\jump)$ which are a flow $\phi=\{\phi_t\}_{t\in \mathbb{R}}$ determining deterministic paths of the process, a nonnegative jump rate function  $q$
and a transition  probability $\jump(x,B)$, specifying the distribution of jump from the point $x$ to a point in the set $B$. Let us consider the flow $\{\phi_t\}_{t\in \mathbb{R}}$ on $\mathbb{R}^N$ generated by a
globally Lipschitz continuous vector field $b\colon \mathbb{R}^N\to\mathbb{R}^N$,  so that for each $x_0\in \mathbb{R}^N$ the unique solution of the initial value problem
\begin{equation}\label{e:vjp}
x'(t)=b({x}(t)), \quad {x}(0)=x_0,
\end{equation}
is given by ${x}(t)=\phi_t(x_0)$. Given a set $E^0\subset \mathbb{R}^N$ we introduce the \emph{outgoing} boundary $\Gamma^{+}$  and the \emph{incoming} boundary $\Gamma^{-}$ which are points  of the boundary $\partial E^0$ of   $E^0$  through which the flow can leave the set $E^0$  and enter the set $E^0$, respectively. They are given by
\begin{equation*}
\Gamma^{\pm}=\{z\in \partial E^0\setminus E^0: z=\phi_{\pm t}( x)  \text{ for some }x\in E^0, t>0, \text{ and } \phi_{\pm s}(x)\in E^0, s\in [0,t)\}.
\end{equation*}
Starting at time $\tau_0=0$  from $X(\tau_0)=X_0=x_0$ with $x_0$ in the state space $E\subset E^0\cup\partial E^0$, the Markov process $X(t)$ follows the trajectory $\phi_t (x_0)$ until the first jump time $\tau_1$ that is defined by either reaching  the boundary $\Gamma^{+}$ or through a random disturbance occurring with intensity $q$ depending on the current position of the process. Then the value $X_1$ of the process at the jump time $\tau_1$ is selected according to $\Pr(X_1\in B|\phi_{\tau_1-\tau_0}(X_0)=x)=\jump (x, B)$ and the process restarts afresh from $X_1$. In this way we select a sequence of jump times $\tau_1<\tau_2<\tau_3 <\ldots$ and a sequence of post-jump values $X_1, X_2, X_3,\ldots$ allowing to define the paths of the process $X=\{X(t)\}_{t\ge 0}$ by
\begin{equation*}
X(t)=\left\{
  \begin{array}{ll}
  \phi_{t-\tau_{n-1}}(X_{n-1}) & \text{ for } \tau_{n-1}\le t<\tau_{n},\\
X_n &\text{ for } t=\tau_n;
\end{array}\right.
\end{equation*}
if  $\tau_\infty:=\lim_{n\to \infty}\tau_n<\infty$ we set $X(t)=\Delta$ for $t\ge \tau_\infty$, where $\Delta$ is a point at infinity. Therefore, the process is defined for all times and it will be called  \emph{the minimal process  with characteristics} $(\phi,q,\jump)$, see \cite{davis93,rudnickityran17} for details.
Let the state space be equipped with a  $\sigma$-finite measure $m$.
By  imposing general conditions on the characteristics (see Theorem~\ref{th:ptsmal}), we showed in \cite{GMTK} the existence of a substochastic semigroup (a positive contraction $C_0$-semigroup of linear operators) on $L^1(E,m)$ describing the evolution of densities for the process.
However, in general it might happen that $\tau_\infty$ is finite with positive probability, so that the minimal process is explosive,  leading to a loss of mass. So the question remains, when the process is non-explosive or, equivalently, the induced  semigroup is stochastic (each operator preserves the norm on positive cone).

A widely used class of mathematical models to describe spatial motion of individuals are velocity-jump processes in which individuals move in $\mathbb{R}^N$ with a constant velocity  and discontinuous changes in the speed or direction of an individual are generated at constant rate according to a Poisson process, see  \cite{Stroock,HH}  for more involved models. These are examples of PDMPs that can be also used  in the kinetic theory of gases or in  neutron transport to model transport of particles  (molecules of gas or neutrons).  Particles move in a bounded region and change randomly their velocities due to collisions with particles of the medium or by hitting the walls. These processes  are usually modeled with linear Boltzmann or linear transport equations  with boundary conditions describing interactions between the particles and the solid walls, see  \cite{beals87,greenberg87,cercignani88,mustapha97,villani02,cercignani02,banasiakarlotti06} and the references therein.
The state $x$ of a particle is described by a position $\mathrm{x}\in \Omega$ and a velocity $\mathrm{v}\in V$, where
$\Omega$ is a sufficiently smooth open subset of $\mathbb{R}^d$ and $V$ is a Borel subset of $\mathbb{R}^d$.
If an external force field $\mathrm{F}$  is present, then  the vector field $b$ on $\mathbb{R}^{2d}$ is of the form $b(\mathrm{x},\mathrm{v})=(\mathrm{v},\mathrm{F}(\mathrm{x},\mathrm{v}))$ as in the Vlasov  equation.
A particular example  is the free transport with $\mathrm{F}\equiv 0$ and the flow
\[
\phi_t(\x,\z)=(\x+ \z t,\z),\quad \x\in \Omega, \z\in V.
\]
We take $E^0=\Omega\times V$ and $m=\Leb\times \nu$, where $\Leb$ is the Lebesgue measure on $\mathbb{R}^d$ and $\nu$ is a Radon measure on $\mathbb{R}^d$ with support $V$.
We have
\[
 \Gamma^{\pm} =\{(\x,\z)\in \partial \Omega\times V: \pm \z\cdot n(\x) >0\},
\]
where $n(\x)$ is the outward normal at $\x\in \partial \Omega$.
We assume that a particle at position $\x\in \Omega$ and with velocity $\z\in V$ changes  its velocity with intensity $q(\x,\z)$ and  chooses a new  velocity according to  the following transition probability
\[
\jump((\x,\z),B)=\int_{V}\mathbf{1}_{B}(\x,\z')p(\x,\z,\z')\nu(d\z'),\quad B\in \B(\Omega\times V), (\x,\z)\in \Omega\times V,
\]
where $p$ is a measurable nonnegative function  defining  the scattering kernel $\kappa$ and satisfying
\begin{equation*}
\kappa(\x,\z',\z):=q(\x,\z)p(\x,\z,\z')
\quad\text{and}\quad  \int_{V} p(\x,\z,\z')\mu(d\z')=1,\quad \x\in \Omega,\z,\z'\in V.
\end{equation*}
To complete the description of the process one needs to define the jump distribution $\jump$ on $\Gamma^{+}$
satisfying $\jump((\x,\z),\Gamma^{-})=1$ for $(\x,\z)\in \Gamma^{+}$. Different types of boundary conditions were introduced in \cite{cercignani88}, see also \cite{lods05} and \cite{lods2018invariant}.
These are typically described by Maxwell-type boundary conditions stating that if a particle reaches the boundary $\Gamma^{+}$ at the point $(\x,\z)$ then with probability $\alpha(x)$ it undergoes a specular reflection and with probability $1-\alpha(x)$ it undergoes a diffuse-type reflection.

In
\cite{GMTK}
our  main tool was a perturbation result for substochastic semigroups on $L^1$ spaces. We considered
initial-boundary value problems  given in the general abstract form
\begin{equation}\label{e:eq}
u'(t) =Au(t)+Bu(t),\quad \Psi_0 u(t)=\Psi u(t),\quad t> 0,\quad u(0)=f,
\end{equation}
where $\Psi_0,\Psi$ are positive and possibly unbounded operators defined on a linear subspace $\mathcal{D}\subset L^1$ with values in a boundary space $L^1_{\partial}$, the operator $B\colon \mathcal{D}\to L^1$ is positive
and $A\colon \mathcal{D}\to L^1$ is such that
the operator $A_0$, defined as the restriction of $A$ to the nullspace $\Ker(\Psi_0)$, i.e.
\begin{equation}\label{d:dombou0}
A_0f=Af, \quad f\in\mathcal{D}(A_0)=\{f\in \mathcal{D}: \Psi_0f=0\}=\Ker(\Psi_0),
\end{equation}
is the generator of a substochastic semigroup on $L^1$. For example, if $m$ is the Lebesgue measure on $\mathbb{R}^N$ and $X$ is the minimal process with characteristics $(\phi,q,\jump)$  then the density $u(t)$ of $X(t)$ should satisfy \eqref{e:eq} with
the operators $A$ and $\Psi_0$ of the form
\[
Af(x)=- \nabla_{x} \cdot(b(x) f(x)) -q(x)f(x),\quad x\in E^0, \quad \Psi_0(f)(x)=f_{|\Gamma^{-}}(x), \quad x\in \Gamma^{-},
\]
for sufficiently smooth $f$ with $f_{|\Gamma^{-}}$ being the trace 
of $f$ on the incoming part $\Gamma^{-}$ of the boundary, while the operators $\Psi$ and $B$ are connected with jumps to the boundary $\Gamma^{-}$ and to the set $E^0$, respectively. Then it is relatively easy to show the well-posedness for problem \eqref{e:eq} with $B\equiv 0$ and $\Psi\equiv 0$ so that the assumption that the operator $(A_0,\mathcal{D}(A_0))$ as in \eqref{d:dombou0} is the generator of a substochastic semigroup will be satisfied. However, if $\Psi$ and $B$ are both unbounded the well-possedness of the problem \eqref{e:eq} might not hold in general.
In \cite{GMTK} we provided sufficient conditions  for the operator $A+B$ with domain $\Ker(\Psi-\Psi_0)$ to have an extension $G$ generating a substochastic semigroup on $L^1$.  In the particular case of  $\Psi_0=\Psi=0$ we recovered Kato's perturbation theorem \cite{kato54,voigt87,banasiakarlotti06} on $L^1$.
If $B\equiv0$ and $\Psi$ is bounded we have a particular example of a boundary perturbation as in Greiner’s perturbation theorem \cite{greiner}.

In Section \ref{s:2}, we extend
Kato's theorem \cite{kato54} to positive perturbations $\B$ of
operators $\A$ that act on abstract AL spaces $\BS$ and  are not densely defined, i.e.  $\overline{\mathcal{D}(\A)}\neq \BS$.
In  Theorem~\ref{th:main} we give sufficient conditions for the existence of  a substochastic semigroup on $\BS_0=\overline{\mathcal{D}(\A)}$ with  generator $(\G,\mathcal{D}(\G))$
being an extension of the part $(\A+\B)_{|}$ of the operator $\A+\B$ in $\BS_0$, i.e.
\begin{equation*}
\G u =\A u+\B u \quad \text{for } u\in \mathcal{D}((\A+\B)_{|})=\{u\in \mathcal{D}(\A): \A u+\B u \in \BS_0\}
\end{equation*}
and $\mathcal{D}((\A+\B)_{|})\subseteq \mathcal{D}(\G)$.
We also provide
 necessary and sufficient conditions for $\G$ to be equal to the operator $(\A+\B)_{|}$ or to its closure $\overline{(\A+\B)_{|}}$.
Going back to equation \eqref{e:eq}  the space $\BS$ is taken to be $L^1\times L^1_{\partial}$ 
and  the operators $\mathcal{A},\mathcal{B}\colon \mathcal{D}(\mathcal{A}) \to L^1\times L^1_{\partial}$ with $\mathcal{D}(\mathcal{A})=\mathcal{D} \times \{0\}$ are defined by
\begin{equation}\label{d:AB0}
\mathcal{A}(f,0)=(Af,-\Psi_{0}f)\quad \text{and}\quad \mathcal{B}(f,0)=(B f,\Psi f) \quad
\textrm{for $f\in \mathcal{D}$}.
\end{equation}

In Section \ref{s:3}, we show how the results from Section~\ref{s:2} 
can be applied to problems as in \eqref{e:eq} with both $B=0$ and $B\neq 0$.
We also complete the characterization of the generator $G$ from \cite{GMTK}.

Finally, Section~\ref{s:4} contains applications of our abstract results to PDMPs and to transport 
equations
with conservative boundary conditions, where we show that the semigroup is stochastic if and only if the generator $G$ is the closure of the corresponding operator $A_{\Psi}+B$.   We revisit and generalize results from~\cite{arlottibanasiaklods07,arlottibanasiaklods11,arlottilods14}. Here the boundary conditions
are given in
an abstract form
\begin{equation*}
f_{|\Gamma^{-}}=H(  f_{|\Gamma^{+}})
,
\end{equation*}
where  $f_{|\Gamma^{-}}$ is the trace %
of $f$ on the incoming part $\Gamma^{-}$ of the boundary,
$f_{|\Gamma^{+}}$ is the trace
of $f$ on the outgoing part $\Gamma^{+}$ of the boundary and $H$ is a positive operator acting between the trace spaces $ L^1(\Gamma^{+},m^{+})$ and $ L^1(\Gamma^{-}, m^{-})$, where $m^{\pm}$ are suitable Borel measures on $\Gamma^{\pm}$.
A particular attention attracted the well-posedness of the collisionless kinetic equation, i.e.  $B\equiv 0$ and $q\equiv 0$,  see \cite{voigt80,greenberg87,mustapha99,lods05,arlottilods05,mustapha08,arlottibanasiaklods11,arlottilods14} and the references therein.
The case of unbounded $B$ and $q$ with arbitrary vector fields with no-reentry boundary conditions (i.e. $H=0$) was studied in \cite{arlottibanasiaklods07}. Non-zero $H$ was mainly treated by first showing the well-posedness for the collisionless equation with the given boundary condition   and then applying perturbation arguments to get existence of solutions for the full equation, see \cite{arlotti91,mee01,banasiakarlotti06}. In the case of dissipative $H$, i.e. with norm strictly less than 1 the collisionless operator generates a substochastic semigroup, while in 
the case where the norm of the boundary operator $H$ is one can be still dealt with but in general only the existence can be shown without uniqueness. The last section contains a pair of examples.

\section{Positive perturbations of non-densely defined operators on abstract AL spaces}
\label{s:2}

In this section we assume that $\BS$ is an abstract AL space, i.e. a Banach lattice where the norm is additive on the positive cone, $\|u+v\|=\|u\|+\|v\|$ for $u,v\in \BS_+$. There exists a unique positive functional $\alpha\colon \BS\to \mathbb{R}$
such that
\begin{equation}\label{d:alp}
\alpha(u)=\langle \alpha,u\rangle=\|u\|,\quad u\in \BS_+.
\end{equation}

Let $(\A,\mathcal{D}(\A))$ be a linear operator on $\BS$.  It is said to be \emph{positive} if $\A u\in \BS_+$ for $u\in \mathcal{D}(\A)\cap \BS_+$. We write $\mathcal{I}$ for the identity operator on  $\BS$.
We recall that if for some real $\lambda$ the operator
$\lambda-\A:=\lambda \mathcal{I}-\A$ is bijective and $(\lambda -\A)^{-1}$ is a bounded linear operator, then $\lambda$ is said to belong to the \emph{resolvent set} $\rho(\A)$ and $R(\lambda,\A):=(\lambda  -\A)^{-1}$ is called the \emph{resolvent operator}
 of $\A$ at $\lambda$.
Following  \cite{arendt87},
a linear operator $\A$ is said to be \emph{resolvent positive} if there exists
$\omega\in\mathbb{R}$ such that $(\omega,\infty)\subseteq\rho(\A)$ and the resolvent operator
$R(\lambda,\A):=(\lambda-\A)^{-1}$ is positive for all $\lambda>\omega$. Generators of substochastic semigroups are resolvent positive. A family $\{\mathcal{S}(t)\}_{t\ge 0}$ of bounded linear operators on a (given) closed subspace $\BS_0$ of $\BS$ is called a \emph{substochastic (stochastic) semigroup}  on $\BS_0$ if it is a $C_0$-semigroup and each operator $\mathcal{S}(t)\colon \BS_0\to \BS_0$ is \emph{substochastic} (resp. \emph{stochastic}), i.e. $\mathcal{S}(t)$ is positive   and $\|\mathcal{S}(t)u\|\le \|u\|$ (resp. $\|\mathcal{S}(t)u\|= \|u\|$) for $u\in \BS_0\cap\BS_+$. 

Suppose now that
$(\A,\mathcal{D}(\A))$ is resolvent positive and such that
\begin{equation}\label{e:subsem}
\alpha(\A u) \le 0\quad \text{for all  }u\in \mathcal{D}(\A)_+:=\mathcal{D}(\A)\cap \BS_+.
\end{equation}
Then $(0,\infty)\subseteq \rho(\A)$ and  $\lambda \|R(\lambda,\A)\|\le 1$ for all $\lambda>0$. If, additionally,  $\mathcal{D}(\A)$ is dense in $\BS$, then the operator $(\A,\mathcal{D}(\A))$ is the generator of a substochastic semigroup on $\BS$, by the Hille-Yosida theorem \cite{engelnagel00}. It is easy to see that condition \eqref{e:subsem} is also necessary for $\A$ to be the generator of a substochastic semigroup.
Moreover,  equality holds in \eqref{e:subsem} if and only if  $(\A,\mathcal{D}(\A))$  generates a  stochastic semigroup.  If the operator $\A$ is not densely defined
then
the part of $\A$ in $\BS_0=\overline{\mathcal{D}(\A)}$ is densely defined in $\BS_0$ and generates a substochastic semigroup on $\BS_0$, see, for example, Corollary II.3.21 in~\cite{engelnagel00}.
We recall that \emph{the part} of $\A$ in $\BS_0=\overline{\mathcal{D}(\A)}$, denoted by $\A_{|}$, is the restriction of $\A$ to the domain
\[
\mathcal{D}(\A_|)=\{u\in \mathcal{D}(\A)\cap \BS_0: \A u\in \BS_0\}.
\]

Arguing as in \cite[Theorem 3.1]{GMTK} (see also \cite[Theorem 2.1]{arlottilodsmokhtar11})  we prove the following perturbation result
extending Kato's perturbation theorem  \cite{kato54,voigt87,thiemevoigt06,banasiakarlotti06} to operators with non-dense domains on abstract AL spaces.

\begin{theore}\label{th:main}
Let $(\A,\mathcal{D}(\A))$ be a resolvent positive operator on $\BS$ and $\B\colon\mathcal{D}(\A)\to \BS$ be a positive operator such that \begin{equation}\label{d:alpha}
\alpha (\A u+\B u) \le 0\quad \text{for all }u\in \mathcal{D}(\A)_+.
\end{equation}
Then there exists a  substochastic semigroup $\{\mathcal{P}(t)\}_{t\ge 0}$ on $\BS_0=\overline{\mathcal{D}(\A)}$ with generator $(\G,\mathcal{D}(\G))$
being an extension of the part $(\A+\B)_{|}$ of $(\A+\B,\mathcal{D}(\A))$ in $\BS_0$, i.e.
\begin{equation}
\G u =\A u+\B u \quad \text{for } u\in \mathcal{D}((\A+\B)_{|})=\{u\in \mathcal{D}(\A): \A u+\B u \in \BS_0\}
\end{equation}
and $\mathcal{D}((\A+\B)_{|})\subseteq \mathcal{D}(\G)$.
The resolvent operator of $\G$ at $\lambda>0$ is given by
\begin{equation}\label{d:resgG}
R(\lambda,\G)u=\lim_{N\to \infty} \sum_{n=0}^{N}R(\lambda,\A)(\B R(\lambda,\A))^n u,\quad u\in \BS_0.
\end{equation}
\end{theore}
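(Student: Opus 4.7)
The plan is to construct $R(\lambda,\G)$ explicitly as the Neumann-type series in \eqref{d:resgG} and then identify it as the resolvent of a generator on $\BS_0$ via the Hille-Yosida theorem. The starting point is a dissipativity estimate: for $v\in\BS_+$, setting $u=R(\lambda,\A)v\in\mathcal{D}(\A)_+$ gives $\A u=\lambda u-v$; combining hypothesis \eqref{d:alpha} with additivity of $\alpha$ on $\BS_+$ yields
\[
\lambda\|R(\lambda,\A)v\|+\|\B R(\lambda,\A)v\|\le\|v\|.
\]
Since $\BS$ is an AL space, positivity of $R(\lambda,\A)$ and $\B R(\lambda,\A)$ lifts this cone estimate to the operator-norm bounds $\|\lambda R(\lambda,\A)\|\le 1$ and $\|\B R(\lambda,\A)\|\le 1$.

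Iterating the estimate on $(\B R(\lambda,\A))^n v\in\BS_+$ yields $\lambda\|R(\lambda,\A)(\B R(\lambda,\A))^n v\|\le\|(\B R(\lambda,\A))^n v\|-\|(\B R(\lambda,\A))^{n+1}v\|$; a telescoping sum gives $\lambda\sum_{n\ge 0}\|R(\lambda,\A)(\B R(\lambda,\A))^n v\|\le\|v\|$, so the series in \eqref{d:resgG} converges absolutely for $v\in\BS_+$ and by linearity defines a positive operator $R(\lambda)$ from $\BS$ into $\BS_0$ with $\|\lambda R(\lambda)\|\le 1$. To obtain the resolvent identity I would introduce the truncations $R_r(\lambda)=\sum_n r^n R(\lambda,\A)(\B R(\lambda,\A))^n$ for $r\in[0,1)$; each converges absolutely in operator norm, coincides with $R(\lambda,\A+r\B)$ (since $\A+r\B$ inherits the same dissipativity hypothesis), and satisfies the resolvent identity. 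Because $R_r(\lambda)v$ increases with $r$ on $\BS_+$ and additivity of the AL-norm converts bounded monotone sequences into norm-convergent ones, $R_r(\lambda)\to R(\lambda)$ strongly on $\BS$ as $r\nearrow 1$, so the resolvent identity passes to the limit.

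For the Hille-Yosida step on $\BS_0$ the essential point is $\lambda R(\lambda)u\to u$ on a dense subspace. For $u\in\mathcal{D}(\A)_+$ the identity $\lambda R(\lambda,\A)u-u=R(\lambda,\A)\A u$ gives $\|\lambda R(\lambda,\A)u-u\|\le\|\A u\|/\lambda\to 0$; consequently $\lambda\|R(\lambda,\A)u\|\to\|u\|$, so $\|\B R(\lambda,\A)u\|\le\|u\|-\lambda\|R(\lambda,\A)u\|\to 0$ and the telescoping bound $\sum_{n\ge 1}\lambda\|R(\lambda,\A)(\B R(\lambda,\A))^n u\|\le\|\B R(\lambda,\A)u\|$ forces the tail to vanish. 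Every $u\in\mathcal{D}(\A)$ can be written $u=R(\mu,\A)w_+-R(\mu,\A)w_-$ with $w_\pm\in\BS_+$, so by linearity $\lambda R(\lambda)u\to u$ on $\mathcal{D}(\A)$, hence on $\BS_0$ by density and the uniform bound $\|\lambda R(\lambda)\|\le 1$. This yields dense range, and injectivity of $R(\lambda)|_{\BS_0}$ follows from the resolvent identity together with the approximation property (if $R(\lambda_0)v=0$ then $R(\mu)v=0$ for all $\mu$, so $v=\lim_\mu\mu R(\mu)v=0$). Hille-Yosida then produces the substochastic semigroup $\{\mathcal{P}(t)\}_{t\ge 0}$ on $\BS_0$ with generator $\G$ whose resolvent is $R(\lambda)|_{\BS_0}$.

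For the extension claim, given $u\in\mathcal{D}((\A+\B)_{|})$, set $v=\lambda u-\A u-\B u\in\BS_0$; applying $R_r(\lambda)$ to the identity $(\lambda-\A-r\B)u=v+(1-r)\B u$ yields $u=R_r(\lambda)v+(1-r)R_r(\lambda)\B u$. As $r\nearrow 1$ the second term vanishes via $\|R_r(\lambda)\|\le 1/\lambda$, while $R_r(\lambda)v\to R(\lambda)v$ in norm; hence $R(\lambda)v=u$, so $u\in\mathcal{D}(\G)$ with $\G u=\lambda u-v=\A u+\B u$. The main obstacle throughout is reconciling the non-density of $\mathcal{D}(\A)$ in $\BS$ with the need for a densely defined generator on $\BS_0$; the AL-space structure is crucial, both for converting cone estimates into operator-norm bounds and for the telescoping tail estimate that drives the approximation property $\lambda R(\lambda)u\to u$ on $\mathcal{D}(\A)$.
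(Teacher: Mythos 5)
Your proposal is correct, and the skeleton is the same as the paper's: both proofs introduce the approximating operators $\A+r\B$ ($r<1$), identify $R(\lambda,\A+r\B)$ with the truncated Neumann series via the factorization $\lambda\mathcal{I}-\A-r\B=(\mathcal{I}-r\B R(\lambda,\A))(\lambda\mathcal{I}-\A)$, and let $r\uparrow 1$ using monotonicity and the uniform bound $\|R(\lambda,\A+r\B)\|\le\lambda^{-1}$. Where you genuinely diverge is in how the semigroup is produced. The paper works at the semigroup level: it takes the substochastic semigroups $\{\mathcal{P}_r(t)\}$ generated by the parts of $\A+r\B$ in $\BS_0$, asserts that $\mathcal{P}(t)u=\lim_{r\to1}\mathcal{P}_r(t)u$ exists uniformly on compact time intervals and is again a substochastic semigroup, and only then identifies $R(\lambda,\G)$ as the part of $\mathcal{R}_\lambda$ in $\BS_0$; this step leans on a monotone convergence theorem for substochastic semigroups of Kato--Voigt type (the argument of \cite[Theorem 2.1]{arlottilodsmokhtar11}). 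You instead stay entirely at the resolvent level: the telescoping estimate $\lambda\sum_{n}\|R(\lambda,\A)(\B R(\lambda,\A))^n v\|\le\|v\|$ gives absolute convergence of the series, the limit is a positive pseudo-resolvent with $\|\lambda R(\lambda)\|\le1$ mapping into $\BS_0$, and the refined tail bound $\lambda\sum_{n\ge1}\|R(\lambda,\A)(\B R(\lambda,\A))^n u\|\le\|\B R(\lambda,\A)u\|\le\|\A u\|/\lambda$ yields $\lambda R(\lambda)u\to u$ on $\mathcal{D}(\A)$, hence injectivity and dense range on $\BS_0$, so Hille--Yosida applies directly. Your route is more self-contained (no appeal to a semigroup convergence theorem) at the cost of verifying the pseudo-resolvent properties by hand; the paper's route gets the semigroup and its approximation property \eqref{d:Plim} as a byproduct, which is used later. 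Your treatment of the extension claim (writing $u=R_r(\lambda)v+(1-r)R_r(\lambda)\B u$ and letting $r\uparrow1$) is a clean alternative to the paper's direct telescoping identity $\mathcal{R}_\lambda(\lambda\mathcal{I}-\A-\B)u=u$; both are valid. The one point you pass over quickly, as does the paper, is that resolvent positivity plus \eqref{d:alpha} yields $(0,\infty)\subseteq\rho(\A)$ with $R(\lambda,\A)\ge0$ for all $\lambda>0$, not just for $\lambda$ beyond the a priori abscissa $\omega$; this is established in the paper's preliminaries and should be cited before the dissipativity estimate is iterated.
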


\begin{remar} In the context of Theorem~\ref{th:main} condition \eqref{d:alpha}  implies  \eqref{e:subsem}, since the operator $\mathcal{B}$ is positive. Hence  $(0,\infty)\subseteq \rho(\A)$ and the operator $R(\lambda,\A)$ is positive for all $\lambda>0$. Thus $\mathcal{B}R(\lambda,\mathcal{A})$ is also positive and condition \eqref{d:alpha} implies that  $\mathcal{B}R(\lambda,\mathcal{A})$ is  a substochastic operator on~$\BS$ for $\lambda>0$.
Note also that we have
\begin{equation}\label{e:main}
\lambda\mathcal{I} -(\mathcal{A}+\mathcal{B})=(\mathcal{I}-\mathcal{B}R(\lambda,\mathcal{A}))(\lambda \mathcal{I}- \mathcal{A}),
\end{equation}
where $\mathcal{I}$ is the identity operator on  $\BS$.  In particular, if the operator $\mathcal{I}-\mathcal{B}R(\lambda,\mathcal{A})$ is invertible with positive inverse, then
the resolvent of $\mathcal{A}+\mathcal{B}$ at $\lambda>0$ is given by
\begin{equation}\label{e:rab}
R(\lambda,\mathcal{A}+\mathcal{B})=R(\lambda,\mathcal{A})(\mathcal{I}-\mathcal{B}R(\lambda,\mathcal{A}))^{-1}
\end{equation}
and $\lambda \|R(\lambda,\mathcal{A}+\mathcal{B})\|\le 1$.
\end{remar}

\begin{proof}
For each $r\in [0,1)$ we define  the operator
\[
\G_r=\mathcal{A}+r\mathcal{B},\quad \mathcal{D}(\G_r)=\mathcal{D}(\A).
\]
Since $\|r \B R(\lambda,\A)\|\le r<1$, we obtain
\[
R(\lambda,\mathcal{G}_r)=R(\lambda,\mathcal{A})(\mathcal{I}-r \B R(\lambda,\A))^{-1}=R(\lambda,\mathcal{A})\sum_{n=0}^\infty r^n (\mathcal{B}R(\lambda,\mathcal{A}))^n.
\]
We have $0\le R(\lambda,\mathcal{G}_r)\le R(\lambda,\mathcal{G}_{r'})$ for $r<r'$ and $\|R(\lambda,\mathcal{G}_r)\|\le \lambda^{-1}$. Thus the limit
\[
\mathcal{R}_\lambda u =\lim_{r\uparrow 1}R(\lambda,\mathcal{G}_r) u
\]
exists for all  $u\in \BS_+$ and
\begin{equation}\label{e:res}
\mathcal{R}_\lambda u=\lim_{r\uparrow 1}R(\lambda,\mathcal{G}_r)u=\lim_{N\to\infty}R(\lambda,\mathcal{A})\sum_{n=0}^N (\mathcal{B}R(\lambda,\mathcal{A}))^n u,\quad u\in \BS.
\end{equation}

The part ${\G_r}_{|}$ of $\G_r$
in $\BS_0=\overline{\mathcal{D}(\A)}$ is the generator of a substochastic semigroup $\{\mathcal{P}_{r}(t)\}_{t\ge 0}$  on $\BS_0$.
The substochastic semigroup $\{\mathcal{P}(t)\}_{t\ge 0}$ is defined by
\begin{equation}\label{d:Plim}
\mathcal{P}(t)u=\lim_{r\to 1}\mathcal{P}_{r}(t)u,\quad u\in \BS_0;
\end{equation}
the convergence is uniform for $t$ in compact subsets of $[0,\infty)$.
If  $\G$ is its generator then
$R(\lambda,\mathcal{G})$ is the part ${\mathcal{R}_{\lambda}}_{|}$ of the operator $\mathcal{R}_\lambda$ in $\BS_0$, where $\mathcal{R}_\lambda$ is defined by \eqref{e:res}.
Since
\[
R(\lambda,\mathcal{A})\sum_{n=0}^N (\mathcal{B}R(\lambda,\mathcal{A}))^n(\lambda \mathcal{I}-\mathcal{A})u=u+R(\lambda,\mathcal{A})\sum_{n=0}^{N-1} (\mathcal{B}R(\lambda,\mathcal{A}))^n\mathcal{B}u
\]
for all $N$ and $u\in \mathcal{D}(\A)$, we see that
\begin{equation}\label{e:inje}
\mathcal{R}_\lambda (\lambda \mathcal{I}-\mathcal{A}-\mathcal{B})u=u
\end{equation}
for $u\in \mathcal{D}(\A)$, by \eqref{e:res}. Now if $u\in \mathcal{D}((\A+\B)_{|})$ then $(\lambda \mathcal{I}-\mathcal{A}-\mathcal{B})u\in \BS_0$, implying that $\mathcal{G}$ is an extension of the operator $(\mathcal{A}+\mathcal{B})_{|}$.
\end{proof}

\begin{remar}\label{r:ALM} Theorem \ref{th:main} remains valid when $\BS$ is as in \cite{arlottilodsmokhtar11}, i.e., a real ordered Banach space with generating normal positive cone $\BS_+$ and additive norm on $\BS_+$. Then we need to assume that the subspace $\BS_0$ has the same properties as $\BS$. 
\end{remar}

\begin{remar}\label{r:kernel} Note that in the setting of Theorem \ref{th:main}
the operators $\mathcal{I}-\B R(\lambda,\A)$ and $\lambda \mathcal{I}-\A-\B$ are injective  for all $\lambda>0$, by~\eqref{e:main} and~\eqref{e:inje}.
\end{remar}

We have the following characterization extending the results of \cite{frosali04}, see also the spectral criterion in  \cite[Theorem 4.3]{banasiakarlotti06}.

\begin{theore}\label{th:spp}
Under the assumptions of Theorem \ref{th:main} the following hold:
\begin{enumerate}[\upshape(i)]
\item\label{i:1} The generator $(\G,\mathcal{D}(\G))$ is the operator $((\A+\B)_{|},\mathcal{D}((\A+\B)_{|}))$ if and only if the range of the operator $\mathcal{I}-\B R(\lambda,\A)\colon \BS\to \BS$ contains $\BS_0$ for some/all $\lambda>0$.
\item\label{i:2} The generator $(\G,\mathcal{D}(\G))$ is the closure of  $((\A+\B)_{|},\mathcal{D}((\A+\B)_{|}))$ if and only if the closure of the range of $\mathcal{I}-\B R(\lambda,\A)$ contains $\BS_0$ for some/all $\lambda>0$.
\end{enumerate}
\end{theore}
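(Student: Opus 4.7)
My strategy is to reduce both parts to a single algebraic identity relating the range of $\lambda\mathcal{I}-(\A+\B)_{|}$ to the range of the substochastic operator $T:=\B R(\lambda,\A)$, and then to exploit the factorization~\eqref{e:main}, the injectivity stated in Remark~\ref{r:kernel}, and the fact (established in the proof of Theorem~\ref{th:main}) that $R(\lambda,\G)={\mathcal{R}_{\lambda}}_{|}$.

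The first step is to prove the identity
\[
\mathrm{Range}(\lambda\mathcal{I}-(\A+\B)_{|})=\mathrm{Range}(\mathcal{I}-T)\cap \BS_0.
\]
For $u\in\mathcal{D}((\A+\B)_{|})$, setting $w=(\lambda\mathcal{I}-\A)u\in \BS$ and applying~\eqref{e:main} gives $(\lambda-\A-\B)u=(\mathcal{I}-T)w\in \BS_0$. Conversely, for any $w\in \BS$ with $(\mathcal{I}-T)w\in \BS_0$, the element $u=R(\lambda,\A)w\in \mathcal{D}(\A)\subseteq \BS_0$ satisfies $(\lambda-\A-\B)u=(\mathcal{I}-T)w\in \BS_0$, which forces $\A u+\B u\in \BS_0$, hence $u\in\mathcal{D}((\A+\B)_{|})$.

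For part \eqref{i:1}, observe that $\lambda\mathcal{I}-\G$ is a bijection from $\mathcal{D}(\G)$ onto $\BS_0$, while $\lambda\mathcal{I}-(\A+\B)_{|}$ is injective by Remark~\ref{r:kernel}. Since $\G$ extends $(\A+\B)_{|}$, equality holds if and only if the two operators have the same range, i.e. $\BS_0=\mathrm{Range}(\mathcal{I}-T)\cap \BS_0$, equivalently $\BS_0\subseteq \mathrm{Range}(\mathcal{I}-T)$. The ``some/all'' alternation is free because $\G$, and hence the equivalence, does not depend on the choice of $\lambda>0$.

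For part \eqref{i:2}, since $R(\lambda,\G)$ is a topological isomorphism from $\BS_0$ onto $(\mathcal{D}(\G),\|\cdot\|_{\G})$, the closure $\overline{(\A+\B)_{|}}$ equals $\G$ exactly when $\mathcal{D}((\A+\B)_{|})$ is a core for $\G$, equivalently when $\mathrm{Range}(\mathcal{I}-T)\cap \BS_0$ is dense in $\BS_0$. One implication is immediate: if this intersection is dense in $\BS_0$ then $\overline{\mathrm{Range}(\mathcal{I}-T)}\supseteq \BS_0$. The main obstacle is the converse. For this I would use the Abel-average approximation already built in the proof of Theorem~\ref{th:main}: for $v\in \BS_0$ and $r\in(0,1)$ set $u_r=R(\lambda,\A)(\mathcal{I}-rT)^{-1}v\in\mathcal{D}(\A)$, so that by direct computation
\[
(\lambda-\A-\B)u_r=(\mathcal{I}-T)(\mathcal{I}-rT)^{-1}v=v-(1-r)T(\mathcal{I}-rT)^{-1}v,
\]
which lies in $\mathrm{Range}(\mathcal{I}-T)$. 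The elementary identity $(1-r)(\mathcal{I}-rT)^{-1}(\mathcal{I}-T)w=(1-r)w-(1-r)^{2}(\mathcal{I}-rT)^{-1}Tw$ together with $\|(\mathcal{I}-rT)^{-1}\|\le 1/(1-r)$ shows $(1-r)(\mathcal{I}-rT)^{-1}\to 0$ in norm on $\mathrm{Range}(\mathcal{I}-T)$, and by uniform boundedness on its closure; so the hypothesis $v\in \overline{\mathrm{Range}(\mathcal{I}-T)}$ forces $(\lambda-\A-\B)u_r\to v$ in $\BS$. The delicate point I expect to be hardest is to upgrade this convergence so that the approximants can be chosen \emph{inside} $\BS_0$: the $\BS_0$-component of $(\lambda-\A-\B)u_r$ must absorb the residual error $(1-r)T(\mathcal{I}-rT)^{-1}v$. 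Here the AL-space structure intervenes essentially (cf.\ Remark~\ref{r:ALM}), through the additivity of the norm on $\BS_+$ and the positivity of $T$: writing $v=v^{+}-v^{-}$ and using substochasticity of $T$ one controls the tails $\|T^{n}v^{\pm}\|$ monotonically, which combined with the Abel identity yields sequences in $\mathrm{Range}(\mathcal{I}-T)\cap \BS_0$ converging to $v^{\pm}$, and hence to $v$. Again the independence on $\lambda$ is inherited from $\G$.
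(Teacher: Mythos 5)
Part~(i) of your argument is correct and is essentially the paper's proof in different clothing: your range identity $\mathrm{Range}(\lambda\mathcal{I}-(\A+\B)_{|})=(\mathcal{I}-\B R(\lambda,\A))(\BS)\cap \BS_0$ is exactly the paper's observation that $\mathcal{D}((\A+\B)_{|})=R(\lambda,\G)\bigl(\BS_0\cap(\mathcal{I}-\B R(\lambda,\A))(\BS)\bigr)$, and the bijectivity/injectivity bookkeeping is the same. Likewise, in part~(ii) your reduction via the core criterion to the condition $\BS_0\subseteq\overline{\BS_0\cap(\mathcal{I}-\B R(\lambda,\A))(\BS)}$ is sound and recovers the paper's identity \eqref{e:closure}, which the paper proves by a direct graph-norm approximation; one implication (this condition forces $\BS_0\subseteq\overline{(\mathcal{I}-\B R(\lambda,\A))(\BS)}$) is indeed trivial.

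The gap is in the converse implication of part~(ii), and your proposed repair does not close it. Writing $T=\B R(\lambda,\A)$, the Abel approximants $(\lambda-\A-\B)u_r=v-(1-r)T(\mathcal{I}-rT)^{-1}v$ do converge to $v$ and do lie in $\mathrm{Range}(\mathcal{I}-T)$, but nothing places them in $\BS_0$: the operator $T$ does not leave $\BS_0$ invariant in general (in the boundary-value setting of Section~\ref{s:3} one has $T(f,0)=(BR(\lambda,A_0)f,\Psi R(\lambda,A_0)f)$, whose boundary component is nonzero while $\BS_0=\overline{\mathcal{D}}\times\{0\}$), so $v-(1-r)T(\mathcal{I}-rT)^{-1}v$ and the Ces\`aro variants $v-T^n v$ need not belong to $\BS_0$ even when $v$ does. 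Your sketched fix --- decomposing $v=v^+-v^-$ and controlling the tails $\|T^nv^\pm\|$ via additivity of the norm --- only yields quantitative convergence of approximants whose membership in $\BS_0\cap\mathrm{Range}(\mathcal{I}-T)$ is precisely what is in question; it never produces an element of that intersection. Hence the implication from $\BS_0\subseteq\overline{(\mathcal{I}-\B R(\lambda,\A))(\BS)}$ to $\G=\overline{(\A+\B)_{|}}$ is not established. You have in fact put your finger on a genuine subtlety: the paper's own proof delivers the criterion in the intersected form $\BS_0=\overline{\BS_0\cap(\mathcal{I}-\B R(\lambda,\A))(\BS)}$ and identifies it with the statement's formulation without comment; a complete proof of the theorem as stated needs exactly the bridge your Abel-mean argument fails to build.
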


\begin{proof}
To prove \eqref{i:1} first, note that
\begin{equation*}\label{e:closure0}
\mathcal{D}((\A+\B)_{|})=R(\lambda,\G)(\BS_0\cap(\mathcal{I}-\B R(\lambda,\A))(\BS))
\end{equation*}
for all $\lambda>0$. We have $\mathcal{D}(\G)=R(\lambda,\G)(\BS_0)$. This implies that $\mathcal{D}(\G)=\mathcal{D}((\A+\B)_{|})$ if and only if $\BS_0=\BS_0\cap (\mathcal{I}-\B R(\lambda,\A))(\BS)$ and point \eqref{i:1} follows.

Now, we show that
\begin{equation}\label{e:closure}
\mathcal{D}(\overline{(\A+\B)_{|}})=R(\lambda,\G)(\overline{\BS_0\cap(\mathcal{I}-\B R(\lambda,\A))(\BS)})
\end{equation}
for all $\lambda>0$ (the proof was kindly communicated by one of the referees).  First suppose that $u\in \mathcal{D}(\overline{(\A+\B)_{|}})$ and put $f=(\lambda I-\overline{(\A+\B)_{|}})u$. Then there exists a sequence $u_n$ of elements from  $\mathcal{D}((\A+\B)_{|})$ such that $u_n\to u$ and $f_n:=(\lambda I-\overline{(\A+\B)_{|}})u_n\to f\in \BS_0$. Since $u_n\in \mathcal{D}(\A)$ and $(\A+\B)u_n\in \BS_0$, we see that $f_n\in \BS_0$ for all $n$. For each $n$ we can find $g_n\in \BS$ such that $u_n=R(\lambda,\A)g_n$. Thus $f_n=(\mathcal{I}-\B R(\lambda,\A))g_n\in (\mathcal{I}-\B R(\lambda,\A))(\BS)$ showing that $f_n\in \BS_0\cap (\mathcal{I}-\B R(\lambda,\A))(\BS)$ and implying that $f\in \overline{\BS_0\cap(\mathcal{I}-\B R(\lambda,\A))(\BS)}$. Consequently, $u=R(\lambda,\G)f\in R(\lambda,\G)(\overline{\BS_0\cap(\mathcal{I}-\B R(\lambda,\A))(\BS)})$.
Conversely, we take $u\in R(\lambda,\G)(\overline{\BS_0\cap(\mathcal{I}-\B R(\lambda,\A))(\BS)})$ and we define $f=(\lambda \mathcal{I}-\G)u$. Then there exists a sequence $g_n\in \BS$  such that $f_n=(\mathcal{I}-\B R(\lambda,\A))g_n\in \BS_0$ and $f_n\to f$ as $n\to \infty$. We have $u_n:=R(\lambda,\A)g_n\in \mathcal{D}(\A)$ and $f_n=(\lambda\mathcal{I}-(\A+\B))u_n$ showing that $u_n\in \mathcal{D}((\A+\B)_{|})$ for any $n$. Hence, $u_n=R(\lambda,\G)f_n\to u$ as $n\to \infty$ and $u\in \mathcal{D}(\overline{(\A+\B)_{|}})$.

Relation \eqref{e:closure} implies that $\G=\overline{(\A+\B)_{|}}$ if and only if $\BS_0=\overline{\BS_0\cap(\mathcal{I}-\B R(\lambda,\A))(\BS)}$, completing the proof of \eqref{i:2}.
\end{proof}

Remark~\ref{r:kernel} and  Theorem~\ref{th:spp} have the following immediate consequence, where  $\sigma_p$ and $\sigma_c$ denote the point and  the continuous spectrum, respectively.
\begin{corollar}\label{c:suffi}
We have the following
\begin{enumerate}[\upshape(i)]
\item $1\not\in \sigma_p(\B R(\lambda,\A))$ for all $\lambda>0$.
\item If $1\in \rho(\B R(\lambda,\A))$ for some $\lambda>0$ then $\G=(\A+\B)_{|}$.
\item If $1\in \sigma_c(\B R(\lambda,\A))$ for some $\lambda>0$ then $\G=\overline{(\A+\B)_{|}}$.
\end{enumerate}
\end{corollar}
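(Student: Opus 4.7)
The plan is to deduce each of the three items directly from Remark~\ref{r:kernel} and Theorem~\ref{th:spp}, so the argument amounts to translating between the spectral language for $\B R(\lambda,\A)$ and the range condition appearing in Theorem~\ref{th:spp}.

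For (i), I would simply recall that Remark~\ref{r:kernel} already states that $\mathcal{I}-\B R(\lambda,\A)$ is injective for every $\lambda>0$. Since injectivity of $\mathcal{I}-T$ is exactly the statement that $1$ is not an eigenvalue of $T$, this immediately gives $1\notin \sigma_p(\B R(\lambda,\A))$ for all $\lambda>0$.

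For (ii), the hypothesis $1\in \rho(\B R(\lambda,\A))$ means that $\mathcal{I}-\B R(\lambda,\A)$ is a bounded bijection of $\BS$ onto itself. In particular its range equals the whole of $\BS$, which trivially contains $\BS_0$, so the range condition of Theorem~\ref{th:spp}\,(i) is satisfied at this $\lambda$, and hence $\G=(\A+\B)_{|}$.

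For (iii), assuming $1\in \sigma_c(\B R(\lambda,\A))$, I would invoke the standard spectral decomposition: membership in the continuous spectrum forces the range of $\mathcal{I}-\B R(\lambda,\A)$ to be dense in $\BS$ (its failure to lie in $\rho$ coming only from unboundedness of the inverse, not from injectivity or density, both of which are already guaranteed). Hence the closure of the range equals $\BS\supseteq \BS_0$, and Theorem~\ref{th:spp}\,(ii) yields $\G=\overline{(\A+\B)_{|}}$.

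There is essentially no real obstacle here: the corollary is a direct spectral-theoretic repackaging of the range-type characterisation in Theorem~\ref{th:spp}. The only point that deserves attention is the convention for $\sigma_c$, which must be the one requiring density of the range; this is the convention consistent with Remark~\ref{r:kernel} (which already rules out the point spectrum) and makes the hypothesis of Theorem~\ref{th:spp}\,(ii) immediately available.
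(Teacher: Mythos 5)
Your proposal is correct and follows exactly the route the paper intends: the paper presents this corollary as an immediate consequence of Remark~\ref{r:kernel} (giving injectivity, hence (i)) and of the range conditions in Theorem~\ref{th:spp} (giving (ii) and (iii) via surjectivity and density of the range of $\mathcal{I}-\B R(\lambda,\A)$, respectively). Your remark about the convention for $\sigma_c$ is the right point to flag, and nothing further is needed.
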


\begin{remar}\label{r:spectral}
Note that if $\mathcal{T}$ is a substochastic operator such that $\Ker(\mathcal{I}-\mathcal{T})=\{0\}$, then the following are equivalent
\begin{enumerate}[(i)]
\item $1\in \rho(\mathcal{T})$,
\item $r(\mathcal{T})<1$, where
$r(\mathcal{T})$
denotes the spectral radius of $\mathcal{T}$, i.e.,
$r(\mathcal{T})=\lim_{n\to\infty}\sqrt[n]{\|\mathcal{T}^n\|}$,
\item $\lim_{n\to \infty}\|\mathcal{T}^n\|=0$,
\item the operator $\mathcal{T}$ is quasi-compact, i.e., there exist a compact
operator $\mathcal{K}$ and $n\in\mathbb{N}$ such that $\|\mathcal{T}^n-\mathcal{K}\|<1$.
\end{enumerate}
\end{remar}

We next prove the following fundamental result, extending the results of \cite{kato54,voigt87,banasiakarlotti06}.

\begin{theore}\label{th:equiv} Let $\lambda>0$.
Under the assumptions of Theorem \ref{th:main}
the following conditions are equivalent:
\begin{enumerate}[\upshape(i)]
\item\label{i:1t} The generator $(\G,\mathcal{D}(\G))$ of the substochastic semigroup $\{P(t)\}_{t\ge 0}$ is the closure of the operator $((\A+\B)_{|},\mathcal{(\A+\B)_{|}})$.
\item\label{i:2t} We have
\begin{equation}\label{e:merg}
\lim_{n\to\infty} (\B R(\lambda,\A))^n u=0,\quad u\in \BS_0.
\end{equation}
\end{enumerate}
If, additionally,
\begin{equation}\label{d:alphaz}
\alpha (\A u+\B u)= 0\quad \text{for all }u\in \mathcal{D}(\A)_+,
\end{equation}
then \eqref{i:1t} and \eqref{i:2t}  are also equivalent to:
\begin{enumerate}[\upshape(i)]\setcounter{enumi}{2}
\item\label{i:3t}  The semigroup $\{P(t)\}_{t\ge 0}$  is stochastic.
\end{enumerate}
\end{theore}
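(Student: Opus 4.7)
The plan is to route both equivalences through the positive substochastic operator $\mathcal{T}:=\B R(\lambda,\A)$ on $\BS$ (substochastic by the remark after Theorem~\ref{th:main}) and a functional identity obtained by applying $\alpha$ to the series representation \eqref{d:resgG} of $R(\lambda,\G)$. The workhorse bound comes from setting $w:=R(\lambda,\A)v\in\mathcal{D}(\A)_+$ for $v\in\BS_+$; since $\A w=\lambda w-v$, condition \eqref{d:alpha} gives $\|\mathcal{T}v\|\le\|v\|-\lambda\|R(\lambda,\A)v\|$, with equality under \eqref{d:alphaz}. Iterating on $\mathcal{T}^n v$, telescoping, and using additivity of the norm on $\BS_+$ together with \eqref{d:resgG} for $v\in\BS_0\cap\BS_+$, I would obtain
\[
\lim_{N\to\infty}\|\mathcal{T}^N v\|\le\|v\|-\lambda\|R(\lambda,\G)v\|,
\]
with equality under \eqref{d:alphaz}.

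For the equivalence \eqref{i:2t}$\Leftrightarrow$\eqref{i:3t} under \eqref{d:alphaz}: a Laplace-transform argument combined with continuity of $t\mapsto\|\mathcal{P}(t)v\|$ shows that the semigroup is stochastic iff $\lambda\|R(\lambda,\G)v\|=\|v\|$ for every $v\in\BS_0\cap\BS_+$. The equality version of the displayed identity recasts this as $\|\mathcal{T}^N v\|\to 0$, which by positivity and additivity of the norm on $\BS_+$ is the same as $\mathcal{T}^N v\to 0$ in norm on $\BS_0\cap\BS_+$. Extension to arbitrary $v\in\BS_0$ then follows from the lattice structure of $\BS$ together with $\|\mathcal{T}\|\le 1$.

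For \eqref{i:1t}$\Leftrightarrow$\eqref{i:2t} I would appeal to Theorem~\ref{th:spp}\eqref{i:2}, which recasts \eqref{i:1t} as the density of $\BS_0\cap(\mathcal{I}-\mathcal{T})(\BS)$ in $\BS_0$. For the forward direction, any $u=g-\mathcal{T}g\in(\mathcal{I}-\mathcal{T})(\BS)$ with $g\in\BS_+$ has $\mathcal{T}^n u=\mathcal{T}^n g-\mathcal{T}^{n+1}g\ge 0$ and $\sum_{n=0}^\infty\|\mathcal{T}^n u\|=\|g\|-\lim_n\|\mathcal{T}^n g\|\le\|g\|$, so $\|\mathcal{T}^n u\|\to 0$; a lattice decomposition of $g$ handles arbitrary signs, and the density hypothesis \eqref{i:1t} combined with $\|\mathcal{T}\|\le 1$ propagates the convergence to all $u\in\BS_0$. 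The converse starts from the telescoping identity $u-\mathcal{T}^{N+1}u=(\mathcal{I}-\mathcal{T})\sum_{n=0}^N\mathcal{T}^n u$, which exhibits $u\in\BS_0$ as a limit of elements of $(\mathcal{I}-\mathcal{T})(\BS)$.

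The main obstacle I expect is closing this last direction cleanly: since $\mathcal{T}$ does not in general map $\BS_0$ into itself, the approximants $u-\mathcal{T}^{N+1}u$ need not lie in $\BS_0$, whereas Theorem~\ref{th:spp}\eqref{i:2} demands density inside $\BS_0\cap(\mathcal{I}-\mathcal{T})(\BS)$. To handle this I would absorb the small tail $\mathcal{T}^{N+1}u\to 0$ by a correction in $\BS_0\cap(\mathcal{I}-\mathcal{T})(\BS)$ using additivity of the AL norm, or alternatively bypass the issue via Corollary~\ref{c:suffi} together with Remark~\ref{r:spectral}, which relate $\G=\overline{(\A+\B)_{|}}$ to the spectral behavior of $\mathcal{T}$ at $1$.
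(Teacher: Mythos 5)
Your overall architecture matches the paper's: you reduce \eqref{i:1t} to the range condition of Theorem~\ref{th:spp}\eqref{i:2}, and you handle \eqref{i:2t}$\Leftrightarrow$\eqref{i:3t} through the identity $\|\mathcal{T}v\|=\|v\|-\lambda\|R(\lambda,\A)v\|$ for $v\in\BS_+$ (where $\mathcal{T}:=\B R(\lambda,\A)$) and its telescoped consequence $\lim_{N}\|\mathcal{T}^{N}v\|=\|v\|-\lambda\|R(\lambda,\G)v\|$ under \eqref{d:alphaz}; that part is essentially the paper's proof and is correct. The converse direction \eqref{i:2t}$\Rightarrow$\eqref{i:1t} also closes without the repair you anticipate: Theorem~\ref{th:spp}\eqref{i:2} only asks that $\overline{(\mathcal{I}-\mathcal{T})(\BS)}$ contain $\BS_0$, and your telescoping identity (or the Ces\`{a}ro version the paper gets from Yosida's mean ergodic theorem) delivers exactly that.

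The genuine gap is in \eqref{i:1t}$\Rightarrow$\eqref{i:2t}. Your assertion that $u=(\mathcal{I}-\mathcal{T})g$ with $g\in\BS_+$ satisfies $\mathcal{T}^{n}u=\mathcal{T}^{n}g-\mathcal{T}^{n+1}g\ge 0$ is false: substochasticity gives $\|\mathcal{T}g\|\le\|g\|$, not $\mathcal{T}g\le g$, so the telescoping $\sum_{n}\|\mathcal{T}^{n}u\|=\|g\|-\lim_{n}\|\mathcal{T}^{n}g\|$ collapses. Worse, the intermediate statement you are after --- that $\mathcal{T}^{n}u\to 0$ for every $u$ in the range of $\mathcal{I}-\mathcal{T}$ --- is false for general positive contractions with trivial fixed space: for the shift $\mathcal{T}\delta_{k}=\delta_{k+1}$ on $\ell^{1}(\mathbb{Z})$ one has $\Ker(\mathcal{I}-\mathcal{T})=\{0\}$, yet $u=(\mathcal{I}-\mathcal{T})\delta_{0}=\delta_{0}-\delta_{1}$ gives $\|\mathcal{T}^{n}u\|=2$ for all $n$. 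So this route cannot be repaired element by element on the range. The paper's argument instead uses that hypothesis \eqref{i:1t} places the \emph{modulus} $|u|$, which again lies in $\BS_0$, inside $\overline{(\mathcal{I}-\mathcal{T})(\BS)}$; Yosida's mean ergodic theorem then makes the Ces\`{a}ro means of $|u|$ tend to $0$, additivity of the AL-norm on the positive cone identifies the norm of a Ces\`{a}ro mean with the Ces\`{a}ro mean of the non-increasing (hence convergent) sequence $\|\mathcal{T}^{n}|u|\|$, forcing $\|\mathcal{T}^{n}|u|\|\to 0$, and finally $\|\mathcal{T}^{n}u\|\le\|\mathcal{T}^{n}|u|\|$. (In the shift example $|u|=\delta_{0}+\delta_{1}$ is precisely \emph{not} in the closure of the range, which is where the counterexample is excluded.) Your fallback through Corollary~\ref{c:suffi} and Remark~\ref{r:spectral} does not rescue the implication either: those results give sufficient spectral conditions for $\G=(\A+\B)_{|}$ or $\G=\overline{(\A+\B)_{|}}$, not the equivalence with \eqref{e:merg}.
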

\begin{proof}
To prove the equivalence of \eqref{i:1t} and \eqref{i:2t} we make use of Theorem~\ref{th:spp} part \eqref{i:2} and show that condition \eqref{e:merg} is  equivalent to $\BS_0\subseteq \overline{(\mathcal{I}-\B R(\lambda,\A))(\BS)}$.
Since the operator $\B R(\lambda,\A)$ is substochastic,
we obtain
\begin{equation}\label{e:meanerg}
\overline{(\mathcal{I}-\B R(\lambda,\A))(\BS)}=\{u\in \BS: \lim_{N\to \infty}\frac{1}{N}\sum_{n=0}^{N-1}(\B R(\lambda,\A))^n u=0\},
\end{equation}
by the Yosida theorem \cite[Theorem 2.1.3]{krengel85}. Now, if \eqref{e:merg} holds then it follows from \eqref{e:meanerg} that
$\BS_0\subseteq \overline{(\mathcal{I}-\B R(\lambda,\A))(\BS)}$. Conversely, take any  $u\in \BS_0\subseteq \overline{(\mathcal{I}-\B R(\lambda,\A))(\BS)}$. We have $|u|\in \BS_0$
and the sequence $\|(\B R(\lambda,\A))^n |u|\|$ is convergent. We get
\[
\lim_{n\to \infty}\|(\B R(\lambda,\A))^n |u|\|=\lim_{N\to \infty}\|\frac{1}{N}\sum_{n=0}^{N-1}(\B R(\lambda,\A))^n |u|\|
  =0,
\]
by additivity of the norm and \eqref{e:meanerg}. This completes the proof of the first equivalence, since $ \|(\B R(\lambda,\A))^n u\|\le \|(\B R(\lambda,\A))^n |u|\|$ for any $u$ and $n$.

Now suppose that condition \eqref{d:alphaz} holds.
Note that a substochastic semigroup with generator $\G$ is stochastic if and only if there is $\omega\in \mathbb{R}$ such that
the operator $\lambda R(\lambda,\G)$ is stochastic for  all $\lambda>\omega$. Since $\mathcal{B}R(\lambda,\mathcal{A})$ is a substochastic operator, condition \eqref{e:merg} holds for all sufficiently large $\lambda>0$ if it holds for one $\lambda>0$. Thus $\G$ is the generator of a  stochastic semigroup if and only if the operator $\lambda R(\lambda,\G)$ is stochastic for all $\lambda$ satisfying \eqref{e:merg}.
Observe that \eqref{d:alphaz} together with \eqref{e:main} leads to
\begin{equation}\label{c:stochres}
\|\lambda R(\lambda,\mathcal{A})u\|=\|u\|-\|\mathcal{B}R(\lambda,\mathcal{A})u\|
\end{equation}
for all  $u\in \BS_+$. Hence, for  $u\in \BS_0\cap \BS_+$ and for each $N$ we obtain
\[
\lambda \|R(\lambda, \A)\sum_{n=0}^N (\mathcal{B}R(\lambda,\mathcal{A}))^n u\|=\|u\|-\|(\mathcal{B}R(\lambda,\mathcal{A}))^{N+1}u\|
\]
By taking the limit as $N\to \infty$, we see that
\[
\lambda\| R(\lambda,\G)u\| =\|u\|-\lim_{N\to\infty} \|(\mathcal{B}R(\lambda,\mathcal{A}))^{N}u\|,
\]
by \eqref{d:resgG} and the equivalence follows.
\end{proof}

We can also extend \cite[Theorem 3.6]{tyran09} to the situation studied in this paper.

\begin{theore}\label{t:opK}
Under the assumptions of Theorem~\ref{th:main} the operator $K\colon \BS\to \BS$ defined by
\begin{equation}\label{eq:K}
K u=\lim_{\lambda\downarrow 0}\mathcal{B}R(\lambda,\mathcal{A})u.
\end{equation}
is substochastic.  Moreover, if $K$ is mean ergodic on $\BS_0$, i.e.,
\begin{equation}\label{d:meaner}
\lim_{N\to\infty}\frac{1}{N}\sum_{n=0}^{N-1}K^n u \quad \text{exists for all } u\in\BS_0,
\end{equation}
then \eqref{e:merg} holds.
\end{theore}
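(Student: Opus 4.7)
The plan addresses the two assertions in turn, and I expect the second implication to be substantially harder than the first.

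\emph{Substochasticity of $K$.} For $u\in\BS_+$ and $0<\mu\le\lambda$, the resolvent identity combined with positivity of $R(\lambda,\A)$ gives $R(\mu,\A) u\ge R(\lambda,\A) u$, and applying $\B\ge 0$ makes the net $(\B R(\lambda,\A) u)_{\lambda>0}$ order-monotone increasing as $\lambda\downarrow 0$. By Theorem~\ref{th:main}, each $\B R(\lambda,\A)$ is substochastic on $\BS$, so this net is norm-bounded by $\|u\|$. In the AL space $\BS$, additivity of the norm on the positive cone yields
\[
\|\B R(\mu,\A) u-\B R(\lambda,\A) u\|=\|\B R(\mu,\A) u\|-\|\B R(\lambda,\A) u\|,
\]
which is the increment of a bounded monotone real sequence and therefore vanishes as $\mu,\lambda\downarrow 0$. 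Consequently the vector net is norm-Cauchy, its limit $Ku\in\BS_+$ is well defined, and extension by linearity via $u=u_+-u_-$ makes $K$ a positive operator with $\|Ku\|\le\|u\|$ on $\BS_+$.

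\emph{Reduction of the second statement.} By the equivalence of \eqref{i:1t} and \eqref{i:2t} in Theorem~\ref{th:equiv}, together with Theorem~\ref{th:spp}\eqref{i:2} and the description~\eqref{e:meanerg}, condition \eqref{e:merg} for $\lambda>0$ is equivalent to $\frac{1}{N}\sum_{n=0}^{N-1}T_\lambda^n u\to 0$ in norm for every $u\in\BS_0$, where $T_\lambda:=\B R(\lambda,\A)$. The lattice decomposition $u=u_+-u_-$ further restricts the problem to $u\in\BS_0\cap\BS_+$. Two standing ingredients are: (a) $\Ker(\mathcal{I}-T_\lambda)=\{0\}$ for every $\lambda>0$, as in Remark~\ref{r:kernel}, obtained by applying \eqref{d:alpha} to $w=R(\lambda,\A) v$ for a hypothetical fixed point $v=T_\lambda v\ge 0$, which forces $\lambda\|w\|\le 0$ and hence $v=0$; and (b) the pointwise inequality $T_\lambda v\le Kv$ on $\BS_+$, obtained from the monotonicity above and positivity of $\B$, which iterates to $T_\lambda^n v\le K^n v$.

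\emph{The fixed-point component.} Setting $Pu:=\lim_N\frac{1}{N}\sum_{n=0}^{N-1}K^n u$, which exists by hypothesis, lies in $\BS_0\cap\BS_+$, and satisfies $K(Pu)=Pu$, ingredient (b) gives $T_\lambda Pu\le K Pu=Pu$. Iterating via positivity of $T_\lambda$, $(T_\lambda^n Pu)_{n\ge 0}$ is order-decreasing in $\BS_+$; additivity of the norm again makes it norm-Cauchy with limit $\ell\ge 0$ satisfying $T_\lambda\ell=\ell$, and ingredient (a) forces $\ell=0$. Therefore $T_\lambda^n Pu\to 0$ in norm and, in particular, the Cesàro averages $\frac{1}{N}\sum T_\lambda^n Pu\to 0$.

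\emph{The remainder and the main obstacle.} It remains to handle the complementary summand $u-Pu\in\overline{(\mathcal{I}-K)(\BS_0)}$. I would approximate $u-Pu$ in norm by coboundaries $v-Kv$ with $v\in\BS_0$ (controlling the error by uniform boundedness of the Cesàro operators), split $v=v_+-v_-$ into positive parts, and use the telescoping identity $\frac{1}{N}\sum_{n=0}^{N-1}T_\lambda^n(v-T_\lambda v)=(v-T_\lambda^N v)/N\to 0$ together with the decomposition $v-Kv=(v-T_\lambda v)-(Kv-T_\lambda v)$. This reduces the claim to $\frac{1}{N}\sum T_\lambda^n(Kv-T_\lambda v)\to 0$ for $v\in\BS_0\cap\BS_+$. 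Here one can exploit the identity $Kv-T_\lambda v=\lambda K R(\lambda,\A) v$, derived from the resolvent identity $R(\mu,\A)-R(\lambda,\A)=(\lambda-\mu) R(\mu,\A) R(\lambda,\A)$ by composition with $\B$ and the limit $\mu\downarrow 0$. The main obstacle is that $Kv-T_\lambda v\ge 0$ is not a $K$-fixed point, so the order-decreasing monotonicity argument of the fixed-point paragraph does not apply verbatim; the resolution, following the strategy of \cite[Thm.~3.6]{tyran09}, combines the mean-ergodic decomposition of $K$ on $\BS_0$, the factorization $K-T_\lambda=\lambda K R(\lambda,\A)$, and injectivity from~(a) to propagate Cesàro decay back to $u-Pu$, thereby completing the proof.
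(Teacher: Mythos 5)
The first assertion (substochasticity of $K$) is proved correctly, by the same monotone-net argument that the paper takes over from \cite{tyran09}: for $u\ge 0$ the net $\B R(\lambda,\A)u$ increases as $\lambda\downarrow 0$, is norm-bounded by $\|u\|$, and additivity of the norm on the positive cone makes it norm-Cauchy. Your reduction of \eqref{e:merg} to the norm decay of the Ces\`aro averages of $T_\lambda:=\B R(\lambda,\A)$ on $\BS_0\cap\BS_+$, the injectivity of $\mathcal{I}-T_\lambda$ (Remark~\ref{r:kernel}), the domination $T_\lambda\le K$ on the positive cone, and the treatment of the fixed-space component $Pu$ are all correct as well.

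The last paragraph, however, contains a genuine gap: the complementary component $u-Pu\in\overline{(\mathcal{I}-K)(\BS_0)}$ is never actually handled. After reducing to showing $\frac{1}{N}\sum_{n=0}^{N-1}T_\lambda^n(Kv-T_\lambda v)\to 0$, you observe yourself that $Kv-T_\lambda v=\lambda K R(\lambda,\A)v$ is not a fixed point of $K$ or of $T_\lambda$, so the order-decreasing argument that disposed of $Pu$ is unavailable; the closing sentence (``the resolution \dots propagates Ces\`aro decay back to $u-Pu$'') is a statement of intent, not an argument, and it is not clear how the factorization $K-T_\lambda=\lambda KR(\lambda,\A)$ alone could supply it. The missing idea --- which is the entire content of the paper's proof and renders the fixed-space/coboundary decomposition unnecessary --- is to apply the mean ergodic theorem to $T_\lambda$ itself: for $u\in\BS_0\cap\BS_+$ one has $0\le \frac{1}{N}\sum_{n=0}^{N-1}T_\lambda^n u\le \frac{1}{N}\sum_{n=0}^{N-1}K^n u$, the majorant converges in norm by hypothesis \eqref{d:meaner}, and in an AL space a sequence of positive elements dominated by a norm-convergent sequence is relatively weakly compact; by the mean ergodic theorem \cite[Theorem 2.1.1]{krengel85} the Ces\`aro averages of $T_\lambda$ at $u$ therefore converge in norm to a fixed point of $T_\lambda$, which is $0$ by Remark~\ref{r:kernel}, and additivity of the norm together with the monotonicity of $\|T_\lambda^n u\|$ upgrades this to $T_\lambda^n u\to 0$, i.e.\ \eqref{e:merg}. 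Note that even within your decomposition you would need exactly this weak-compactness step to treat the coboundary closure, so the decomposition buys nothing.
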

\begin{proof} The proof of the first part  is as in  \cite{tyran09}. Note that  $\mathcal{B}R(\mu,\mathcal{A})\le \mathcal{B}R(\lambda,\mathcal{A})\le K$ for $0<\lambda<\mu$.  For any nonnegative $u\in \BS_0$ we have
\[
0\le \frac{1}{N}\sum_{n=0}^{N-1}(\mathcal{B}(R(\lambda,\mathcal{A})))^n u\le \frac{1}{N}\sum_{n=0}^{N-1}K^n u
\]
and  \eqref{d:meaner} holds. Thus the set $\{\frac{1}{N}\sum_{n=0}^{N-1}(\mathcal{B}(R(\lambda,\mathcal{A})))^n u:N\ge 1\}$ is conditionally weakly compact.
 By Remark \ref{r:kernel} we have $\Ker(\mathcal{I}-\mathcal{B}R(\lambda,\mathcal{A}))=\{0\}$ for $\lambda>0$. This together with the mean ergodic theorem \cite[Theorem 2.1.1]{krengel85} gives
\[
\lim_{N\to \infty}\frac{1}{N}\sum_{n=0}^{N-1}(\mathcal{B}(R(\lambda,\mathcal{A})))^n u=0
\]
for all nonnegative $u\in \BS_0$. Additivity of the norm implies now that \eqref{e:merg} holds and completes the proof.
\end{proof}

\begin{remar}\label{r:meanerg}
Note that  a substochastic operator $K$ on an $L^1$ space is mean ergodic on $L^1$ if and only if it is weakly almost periodic, i.e. the set $\{K^n u:n\ge 0\}$ is relatively weakly compact for each $u\in L^1$,  see \cite{kornfeldlin00}. In particular, if  $Ku\le u$ for some $u\in L^1$ and $u$ is a quasi-interior element (i.e. $u>0$ a.e.),  then the operator $K$ is mean ergodic on $L^1$.
\end{remar}

\begin{remar}
Theorem~\ref{th:spp},  Corollary \ref{c:suffi} and Theorem~\ref{t:opK}  are valid when   $\BS$ is as in \cite{arlottilodsmokhtar11}, see Remark~\ref{r:ALM}. In Theorem~\ref{th:equiv} we used the lattice property to prove that condition \eqref{i:1t} implies \eqref{e:merg} while the proof of the converse implication is valid in general Banach spaces.
\end{remar}

\section{Perturbations of boundary conditions}
\label{s:3}

In this section we revisit the perturbation theorem for substochastic semigroups from \cite{GMTK}
and show how the results from Section~\ref{s:2} can be used to obtain the characterization of the generator for problems as in \eqref{e:eq}.

Let $(E,\mathcal{E},m)$ and $(E_{\partial},\mathcal{E}_{\partial},m_{\partial})$ be two $\sigma$-finite measure spaces. Denote by $L^1=L^1(E,\mathcal{E},m)$ and $L^1_{\partial}=L^1(E_{\partial},\mathcal{E}_{\partial},m_{\partial})$ the corresponding spaces of integrable functions.
We consider linear operators $A\colon \mathcal{D}\to L^1$ and $\Psi_0\colon \mathcal{D}\to L^1_{\partial}$, where $\mathcal{D}$ is a linear subspace of $L^1$.
Our fundamental assumption is the following:
\begin{assumptio}\label{a:S2}
There exists $\omega\in \mathbb{R}$  such that for each $\lambda>\omega$ the operator $\Psi_0$ restricted to the nullspace $\Ker(\lambda -A)=\{f\in \mathcal{D}:\lambda f-Af=0\}$ has a positive right inverse, i.e. there exits a positive operator $\Psi(\lambda)\colon L^1_{\partial} \to \Ker(\lambda-A)$ such that $\Psi_0\Psi(\lambda)f_{\partial}=f_{\partial}$ for $f_{\partial}\in L^1_{\partial}$.
\end{assumptio}
We consider $\BS=L^1\times L^1_{\partial}$  with norm
\[
\|(f,f_{\partial})\|=
\int_E |f(x)|m(dx)+\int_{E_{\partial}}|f_{\partial}(x)|m_{\partial}(dx),\quad (f,f_{\partial})\in L^1\times L^1_{\partial},
\]
and we define the operator $\A\colon \mathcal{D}(\mathcal{A})\to \BS$ with $\mathcal{D}(\mathcal{A})=\mathcal{D} \times \{0\}$  by
\begin{equation}\label{d:AB}
\mathcal{A}(f,0)=(Af,-\Psi_{0}f)\quad
\textrm{for $f\in \mathcal{D}$}.
\end{equation}

We start with the following result.
\begin{lemm} Let the operators $A$ and $\Psi_0$ satisfy Assumption \ref{a:S2}.
Suppose that the operator $(A,\Ker(\Psi_0))$ is resolvent positive  and that
\begin{equation}\label{e:nuzero}
\int_E Af\, dm -\int_{E_{\partial}} \Psi_0 f\, dm_{\partial}\le 0 \quad \text{for all nonnegative } f\in \mathcal{D}.
\end{equation} Then the operator $\A$ defined in \eqref{d:AB}
is resolvent positive
 with the resolvent operator at $\lambda>0$ given by
\begin{equation}\label{eq:RA}
R(\lambda,  \mathcal{A})(f,f_{\partial})=(R(\lambda,A_0)f+\Psi(\lambda) f_{\partial},0),\quad (f,f_{\partial})\in L^1\times L^1_{\partial}, \lambda>0,
\end{equation}
and the part $\A_{|}$ of $\A$ in $\BS_0=\overline{\mathcal{D}(\A)}=\overline{\mathcal{D}}\times \{0\}$ is the generator of a substochastic semigroup on $\BS_0$. Moreover, $\overline{\mathcal{D}}=L^1$ if and only if $\overline{\Ker(\Psi_0)}=L^1$.
\end{lemm}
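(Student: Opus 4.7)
The plan is to (i) verify the resolvent formula \eqref{eq:RA} by direct construction; (ii) read off resolvent positivity and the sub-norm estimate \eqref{e:subsem} from it together with~\eqref{e:nuzero}; (iii) invoke the generation result for the part of a resolvent positive operator in $\overline{\mathcal{D}(\A)}$; and (iv) establish the equivalence through an approximation argument based on an operator-norm decay of $\Psi(\lambda)$ that is hidden in~\eqref{e:nuzero}.

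For (i), given $(f,f_{\partial})\in L^1\times L^1_{\partial}$ and $\lambda>0$ large enough so that both $\lambda\in\rho(A_0)$ and Assumption~\ref{a:S2} apply, I would set $g:=R(\lambda,A_0)f+\Psi(\lambda)f_{\partial}$. Since $R(\lambda,A_0)f\in\Ker(\Psi_0)\subseteq\mathcal{D}$ and $\Psi(\lambda)f_{\partial}\in\Ker(\lambda-A)\subseteq\mathcal{D}$, we have $g\in\mathcal{D}$; the identities $(\lambda-A_0)R(\lambda,A_0)f=f$, $(\lambda-A)\Psi(\lambda)f_{\partial}=0$, $\Psi_0 R(\lambda,A_0)f=0$ and $\Psi_0\Psi(\lambda)f_{\partial}=f_{\partial}$ combine to give $(\lambda\mathcal{I}-\A)(g,0)=(f,f_{\partial})$. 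Uniqueness of $g$ is immediate from injectivity of $\lambda-A_0$ on $\Ker(\Psi_0)$. Positivity of $R(\lambda,\A)$ is inherited from positivity of $R(\lambda,A_0)$ (resolvent positivity of $A_0$) and of $\Psi(\lambda)$ (Assumption~\ref{a:S2}), so $\A$ is resolvent positive with the stated formula.

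For (ii) and (iii), the norming functional on $\BS$ is $\alpha(g,g_{\partial})=\int_E g\,dm+\int_{E_{\partial}}g_{\partial}\,dm_{\partial}$, so for $(f,0)\in\mathcal{D}(\A)\cap\BS_+$ hypothesis \eqref{e:nuzero} reads precisely as $\alpha(\A(f,0))\le 0$, i.e.\ \eqref{e:subsem}. Combined with resolvent positivity, the generation result cited just before Theorem~\ref{th:main} (Corollary~II.3.21 of~\cite{engelnagel00}) applies and produces a substochastic $C_0$-semigroup on $\BS_0=\overline{\mathcal{D}(\A)}=\overline{\mathcal{D}}\times\{0\}$ whose generator is the part $\A_{|}$ of $\A$ in $\BS_0$.

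The final equivalence is the subtle point. One direction is immediate, since $\Ker(\Psi_0)\subseteq\mathcal{D}$ forces $\overline{\Ker(\Psi_0)}\subseteq\overline{\mathcal{D}}$. For the converse I would test \eqref{e:nuzero} on $f=\Psi(\lambda)f_{\partial}$ with $f_{\partial}\ge 0$; using $A\Psi(\lambda)f_{\partial}=\lambda\Psi(\lambda)f_{\partial}$ and $\Psi_0\Psi(\lambda)f_{\partial}=f_{\partial}$ this yields $\lambda\|\Psi(\lambda)f_{\partial}\|_{L^1}\le\|f_{\partial}\|_{L^1_{\partial}}$, and positivity of $\Psi(\lambda)$ extends the bound to all of $L^1_{\partial}$. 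For every $f\in\mathcal{D}$ the decomposition $f=(f-\Psi(\lambda)\Psi_0 f)+\Psi(\lambda)\Psi_0 f$ then splits $f$ as an element of $\Ker(\Psi_0)$ plus a remainder of $L^1$-norm $O(1/\lambda)$, so $\mathcal{D}\subseteq\overline{\Ker(\Psi_0)}$. The main obstacle here is recognizing that this $1/\lambda$ operator-norm decay of $\Psi(\lambda)\colon L^1_{\partial}\to L^1$ is not evident from Assumption~\ref{a:S2} alone: it is precisely the boundary dissipativity encoded in \eqref{e:nuzero} that forces the (generally non-dense) range of $\Psi(\lambda)$ to collapse as $\lambda\to\infty$.
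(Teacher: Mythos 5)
Your proposal is correct. Steps (i)--(iii) coincide in substance with the paper's proof: the paper simply cites an external reference for the resolvent formula \eqref{eq:RA} (your direct verification via $(\lambda-A_0)R(\lambda,A_0)f=f$, $(\lambda-A)\Psi(\lambda)f_\partial=0$, $\Psi_0\Psi(\lambda)f_\partial=f_\partial$ is exactly the computation behind that citation), and then, as you do, reads \eqref{e:subsem} off from \eqref{e:nuzero} and invokes the generation result for the part of $\A$ in $\BS_0$. Where you genuinely diverge is the converse of the ``Moreover'' claim. The paper argues abstractly: when $\overline{\mathcal{D}}=L^1$ one has $\mathcal{D}(\A_{|})=\Ker(\Psi_0)\times\{0\}$, and this set is dense in $\BS_0=L^1\times\{0\}$ because the part of a Hille--Yosida-type operator in the closure of its domain is densely defined there. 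You instead extract the quantitative bound $\lambda\|\Psi(\lambda)f_\partial\|\le\|f_\partial\|$ by testing \eqref{e:nuzero} on $f=\Psi(\lambda)f_\partial\ge 0$ and then use the decomposition $f=(f-\Psi(\lambda)\Psi_0 f)+\Psi(\lambda)\Psi_0 f$ to show $\mathcal{D}\subseteq\overline{\Ker(\Psi_0)}$ directly. Your route is longer but self-contained and in fact proves the stronger unconditional identity $\overline{\mathcal{D}}=\overline{\Ker(\Psi_0)}$ (not merely the equivalence of their density in $L^1$); the estimate $\lambda\|\Psi(\lambda)\|\le 1$ you isolate is also genuinely useful elsewhere in this circle of ideas (it is what makes $\B R(\lambda,\A)$ substochastic in Theorem~\ref{th:pertboun}). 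The only cosmetic caveat is that Assumption~\ref{a:S2} furnishes $\Psi(\lambda)$ only for $\lambda>\omega$, so both your argument and formula \eqref{eq:RA} should be read for $\lambda>\max(\omega,0)$ first, with $(0,\infty)\subseteq\rho(\A)$ then following from \eqref{e:subsem} and resolvent positivity as noted at the start of Section~\ref{s:2}; this does not affect the density argument, which only needs large $\lambda$.
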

\begin{proof} By e.g \cite[Section 3.3.4]{rudnickityran17} we have \eqref{eq:RA}.
It follows from \eqref{e:nuzero} that the operator $(\A,\mathcal{D}(\A))$ satisfies \eqref{e:subsem}. Thus $(0,\infty)\subset \rho(\A)$, the part $\A_{|}$ of $\A$ in $\BS_0$ is the generator of a substochastic semigroup on $\BS_0$ and $\mathcal{D}(\A_{|})$ is dense in $\BS_0$.  We have $\Ker(\Psi_0)\subset \mathcal{D}$. Hence, if $\Ker(\Psi_0)$ is dense in $L^1$ then so is $\mathcal{D}$. Since
\[
\mathcal{D}(\A_{|})=\{(f,0)\in \mathcal{D}\times \{0\}: Af\in \overline{\mathcal{D}}, \Psi_0f=0\},
\]
if conversely $\mathcal{D}$ is dense in $L^1$ then we have $\mathcal{D}(\A_{|})=\Ker(\Psi_0)\times\{0\}$, thus $\Ker(\Psi_0)$ is also dense in $L^1$.
\end{proof}

We first consider problem  \eqref{e:eq} with $B=0$.
Given a positive operator $\Psi\colon \mathcal{D}\to L^1_\partial$  we define the operator  $(A_{\Psi},\mathcal{D}(A_\Psi))$  by
\begin{equation}
\label{d:dombou}
A_{\Psi}f=Af, \quad f\in \mathcal{D}(A_{\Psi})=\{f\in \mathcal{D}: \Psi_0f=\Psi f\}.
\end{equation}
Thus, starting with the operator $(A_0,\mathcal{D}(A_0))=(A,\Ker(\Psi_0))$ as in \eqref{d:dombou0} we perturb its domain and ask when this operator is again the generator of a substochastic semigroup.
We have the following generation result.

\begin{theore}\label{th:pertboun}
Let the operators $A$ and $\Psi_0$ satisfy Assumption \ref{a:S2} and let $(A_0,\mathcal{D}(A_0))=(A,\Ker(\Psi_0))$ be the generator of a substochastic semigroup on $L^1$.
Suppose that $\Psi\colon \mathcal{D}\to L^1_\partial$ is a positive operator such that
\begin{equation}\label{eq:zeroA}
\int_{E} Af \, dm+\int_{E_{\partial}}(\Psi f-\Psi_0f)\, dm_{\partial}\le 0\quad \text{for all nonnegative } f\in \mathcal{D}.
\end{equation}
Then there exists an extension $G_{\Psi}$ of the operator $A_{\Psi}$ defined in \eqref{d:dombou} generating a substochastic semigroup on $L^1$. We have
\begin{equation}\label{d:GpsiA}
\mathcal{D}(A_\Psi)\subseteq \mathcal{D}(G_{\Psi})\subseteq \mathcal{D},\quad G_{\Psi} f= Af \quad \text{for } f\in \mathcal{D}(G_{\Psi}),
\end{equation}
and the resolvent operator of $G_\Psi$ at $\lambda>0$ is given by
\begin{equation}\label{e:GPsi}
R(\lambda, G_{\Psi})f=R(\lambda,A_0)f+\sum_{n=0}^\infty \Psi(\lambda)(\Psi \Psi(\lambda))^n \Psi R(\lambda,A_0)f,\quad f\in L^1.
\end{equation}
Moreover,
\begin{enumerate}[\upshape(i)]
\item\label{i:ec1} $G_{\Psi}=A_{\Psi}$ if and only if  $\Psi (\Ker(\Psi_0))\subseteq (I_{\partial}-\Psi\Psi(\lambda))(L^1_{\partial})$ for some/all $\lambda>0$, where $I_{\partial}$ is the identity operator on $L^1_{\partial}$.
\item\label{i:ec2} $G_{\Psi}=\overline{A_{\Psi}}$ if and only $(\Psi\Psi(\lambda))^nf_{\partial}\to 0$ as $n\to \infty$ for all $f_{\partial}\in \Psi (\Ker(\Psi_0))$ and for some/all $\lambda>0$.
\end{enumerate}
\end{theore}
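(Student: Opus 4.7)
The plan is to apply Theorem \ref{th:main} to the product space $\BS = L^1\times L^1_\partial$ with the operator $(\A,\mathcal{D}(\A))$ from \eqref{d:AB} and the positive operator $\B\colon \mathcal{D}(\A)\to \BS$ defined by $\B(f,0)=(0,\Psi f)$. The lemma immediately preceding the theorem supplies resolvent positivity of $\A$ together with the explicit formula \eqref{eq:RA}, while \eqref{eq:zeroA} translates directly into the subnormality condition \eqref{d:alpha}:
\[
\alpha(\A(f,0)+\B(f,0))=\int_{E}Af\,dm+\int_{E_{\partial}}(\Psi f-\Psi_0 f)\,dm_{\partial}\le 0.
\]
Since $(A_0,\Ker(\Psi_0))$ already generates a substochastic semigroup on $L^1$, the preceding lemma gives $\overline{\mathcal{D}}=L^1$, so $\BS_0=L^1\times\{0\}$, which we identify with $L^1$. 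Then Theorem \ref{th:main} yields a substochastic semigroup on $L^1$ whose generator I will call $G_\Psi$.

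Next I would identify the part $(\A+\B)_{|}$ in $\BS_0$: for $(f,0)\in\mathcal{D}\times\{0\}$ one has $\A(f,0)+\B(f,0)=(Af,-\Psi_0f+\Psi f)$, which lies in $L^1\times\{0\}$ precisely when $\Psi_0 f=\Psi f$. This shows $\mathcal{D}((\A+\B)_{|})=\mathcal{D}(A_\Psi)\times\{0\}$ with action $(f,0)\mapsto(Af,0)$, so $G_\Psi$ extends $A_\Psi$ and \eqref{d:GpsiA} holds. To extract \eqref{e:GPsi} from the abstract formula \eqref{d:resgG}, I iterate: $R(\lambda,\A)(f,0)=(R(\lambda,A_0)f,0)$, then $\B R(\lambda,\A)(f,0)=(0,\Psi R(\lambda,A_0)f)$, and for $n\ge 1$,
\[
R(\lambda,\A)(\B R(\lambda,\A))^n(f,0)=(\Psi(\lambda)(\Psi\Psi(\lambda))^{n-1}\Psi R(\lambda,A_0)f,\,0),
\]
by repeated use of \eqref{eq:RA}; summing gives \eqref{e:GPsi} in the first coordinate.

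For the characterizations, I would apply Theorems \ref{th:spp} and \ref{th:equiv} after translating the product-space conditions into conditions on $L^1_\partial$. A direct computation gives
\[
(\mathcal{I}-\B R(\lambda,\A))(g,g_\partial)=(g,\,g_\partial-\Psi R(\lambda,A_0)g-\Psi\Psi(\lambda)g_\partial),
\]
so $(f,0)$ lies in the range if and only if $(I_\partial-\Psi\Psi(\lambda))g_\partial=\Psi R(\lambda,A_0)f$ is solvable. Since $R(\lambda,A_0)(L^1)=\Ker(\Psi_0)$, requiring this for every $f\in L^1$ is equivalent to $\Psi(\Ker(\Psi_0))\subseteq(I_\partial-\Psi\Psi(\lambda))(L^1_\partial)$, giving \eqref{i:ec1} by Theorem \ref{th:spp}\eqref{i:1}. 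For \eqref{i:ec2}, Theorem \ref{th:equiv} gives $G_\Psi=\overline{A_\Psi}$ if and only if $(\B R(\lambda,\A))^n(f,0)\to 0$ for all $f\in L^1$; by the same iteration this second coordinate is $(\Psi\Psi(\lambda))^{n-1}\Psi R(\lambda,A_0)f$, and as $f$ ranges over $L^1$ the element $\Psi R(\lambda,A_0)f$ ranges over $\Psi(\Ker(\Psi_0))$, yielding the stated criterion.

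The routine hypothesis verification and the identification of $\BS_0$ with $L^1$ are mechanical. The main obstacle is bookkeeping: one must be attentive when unwinding the iterates $R(\lambda,\A)(\B R(\lambda,\A))^n$ to get the geometric-series structure involving $\Psi\Psi(\lambda)$, and must correctly observe that $\Psi R(\lambda,A_0)(L^1)=\Psi(\Ker(\Psi_0))$ so that the abstract range/ergodicity conditions of Theorems \ref{th:spp}--\ref{th:equiv} cleanly reduce to conditions on the boundary operator $I_\partial-\Psi\Psi(\lambda)$ alone.
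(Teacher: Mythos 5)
Your overall strategy is exactly the paper's: pass to $\BS=L^1\times L^1_{\partial}$ with $\A(f,0)=(Af,-\Psi_0f)$ and $\B(f,0)=(0,\Psi f)$, check \eqref{d:alpha} via \eqref{eq:zeroA}, identify $(\A+\B)_{|}$ with $A_\Psi$ on $\BS_0=L^1\times\{0\}$, unwind the resolvent series to get \eqref{e:GPsi}, and reduce \eqref{i:ec1} and \eqref{i:ec2} to Theorem~\ref{th:spp}\eqref{i:1} and Theorem~\ref{th:equiv} through the computation of $(\mathcal{I}-\B R(\lambda,\A))(g,g_\partial)$ and of the iterates. All of those steps are carried out correctly and match the paper's proof, including the observation that $\Psi R(\lambda,A_0)(L^1)=\Psi(\Ker(\Psi_0))$.

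There is, however, one genuine gap: you assert that ``\eqref{d:GpsiA} holds'' after noting only that $G_\Psi$ extends $A_\Psi$. That gives the first inclusion $\mathcal{D}(A_\Psi)\subseteq\mathcal{D}(G_\Psi)$, but \eqref{d:GpsiA} also claims $\mathcal{D}(G_\Psi)\subseteq\mathcal{D}$ and $G_\Psi f=Af$ for \emph{every} $f\in\mathcal{D}(G_\Psi)$, and this does not follow from Theorem~\ref{th:main}, which says nothing about how large $\mathcal{D}(\G)$ is beyond containing $\mathcal{D}((\A+\B)_{|})$. The paper proves this separately: for $f\in L^1$ and $g=R(\lambda,G_\Psi)f$ it forms the partial sums $g_N=R(\lambda,A_0)f+\sum_{n=0}^N\Psi(\lambda)(\Psi\Psi(\lambda))^n\Psi R(\lambda,A_0)f$, which lie in $\mathcal{D}$ and satisfy $Ag_N=\lambda g_N-f$; since $g_N\to g$ and $Ag_N\to\lambda g-f$, the closedness of $A$ (inherited from the closedness of $\A$) yields $g\in\mathcal{D}$ and $Ag=\lambda g-f=G_\Psi g$. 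Without an argument of this kind your proof establishes the generation result and the two characterizations, but not the full conclusion \eqref{d:GpsiA}.
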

\begin{remar}
Note that $\Ker(\Psi_0)=R(\lambda,A_0)(L^1)$ for all $\lambda>0$ and $\Ker(I_{\partial}-\Psi\Psi(\lambda))=\{0\}$ for all $\lambda>0$. Thus if  $\Psi (\Ker(\Psi_0))\subseteq (I_{\partial}-\Psi\Psi(\lambda))(L^1_{\partial})$  then $(I_{\partial}-\Psi\Psi(\lambda))^{-1}$ is well defined on $\Psi(R(\lambda,A_0)(L^1))$ and
\begin{equation}\label{eq:resbou}
 R(\lambda,A_{\Psi})f=(I+\Psi(\lambda)(I_{\partial }-\Psi \Psi(\lambda))^{-1}\Psi)R(\lambda,A_0)f,\quad f\in L^1,\lambda>0.
\end{equation}
\end{remar}
\begin{proof}
On $\BS=L^1\times L^1_{\partial}$ we take the operator $\A$ as in \eqref{d:AB} and we define the operator $\B\colon \mathcal{D}(\A)\to \BS$  by
\begin{equation}\label{e:opB1}
\B(f,0)=(0,\Psi f),\quad f\in \mathcal{D}.
\end{equation}
Since \eqref{eq:zeroA} implies \eqref{e:nuzero}, we see that all assumptions of Theorem~\ref{th:main} hold. Observe that the part of $(\A+\B)_{|}$ in $\BS_0=L^1\times \{0\}$ is given by
\[
(\mathcal{A}+\mathcal{B})_{|}(f,0)=(A_{\Psi}f,0), \quad f\in \mathcal{D}(A_{\Psi}).
\]
From Theorem~\ref{th:main} it follows that there exists an extension $\G$ of $(\mathcal{A}+\mathcal{B})_{|}$ generating a substochastic semigroup on $\BS_0$ and with resolvent operator of $\G$ at $\lambda>0$ given by \eqref{d:resgG}. We have $\G(f,0)=(G_{\Psi}f,0)$ for $(f,0)\in \mathcal{D}(\G)=\mathcal{D}(G_{\Psi})\times \{0\}$. Hence, $G_{\Psi}f=A_{\Psi}f$ for $f\in \mathcal{D}(A_{\Psi})$ and $G_{\Psi}$ is the generator of a substochastic semigroup on $L^1$. Since $R(\lambda,\A)\B (f,0)=(\Psi(\lambda)\Psi f,0)$ for $f\in \mathcal{D}$, by \eqref{eq:RA} and \eqref{e:opB1},  we obtain $(R(\lambda,\A)\B)^n (f,0)=((\Psi(\lambda)\Psi)^n f,0)$ for all $n\ge 1$ and $f\in \mathcal{D}$ implying that
\[
\begin{split}
R(\lambda,\A)(\B R(\lambda,\A))^n(f,0)&=((\Psi(\lambda)\Psi)^n R(\lambda,A_0)f,0)\\
&=(\Psi(\lambda) (\Psi\Psi(\lambda))^{n-1}\Psi R(\lambda,A_0)f,0).
\end{split}
\]
This together with \eqref{d:resgG} shows that $R(\lambda,G_{\Psi})$ is given by \eqref{e:GPsi}. It remains to show that the operator $(A,\mathcal{D})$ is an extension of $G_{\Psi}$. To this end we take $f\in L^1$, $g=R(\lambda,G_\Psi)f$ and
\[
g_N=R(\lambda,A_0)f+\sum_{n=0}^N \Psi(\lambda)(\Psi \Psi(\lambda))^n \Psi R(\lambda,A_0)f, \quad N\ge 0.
\]
We have $g_N\to g$ in $L^1$ as $N\to \infty$, by \eqref{e:GPsi}.
Since $R(\lambda,A_0)f\in \mathcal{D}$ and $\Psi(\lambda)$ has values in $\mathcal{D}$, we get
$g_N\in \mathcal{D}$ and $Ag_N=\lambda g_N-f$. Thus $Ag_N\to \lambda g-f$ in $L^1$ as $N\to \infty$. Since the operator $\A$ is closed, we see that the operator $A$ is closed. Thus $g\in \mathcal{D}$ and $\lambda g-f= Ag$. Consequently, $Ag=\lambda g -(\lambda g-G_\Psi g)=G_\Psi g$, completing the proof of \eqref{d:GpsiA}.

To prove the equivalence in \eqref{i:ec1} we make use of Theorem~\ref{th:spp} \eqref{i:1}. We have
\[
(\mathcal{I}-\B R(\lambda,\A))(f,f_\partial)=(f, -\Psi R(\lambda,A_0)f+(I_{\partial}-\Psi \Psi(\lambda))f_{\partial})
\]
for all $(f,f_{\partial})\in L^1\times L^1_{\partial}$.
Hence,
$L^1\times \{0\}\subseteq (\mathcal{I}-\B R(\lambda,\A))(L^1\times L^1_{\partial})$ if and only if for each $f\in L^1$ there exists $f_{\partial}\in L^1_{\partial}$ such that
\[
\Psi R(\lambda,A_0)f=(I_{\partial}-\Psi \Psi(\lambda))f_{\partial},
\]
completing the proof of \eqref{i:ec1}, since $\Ker(\Psi_0)=\mathcal{D}(A_0)=R(\lambda,A_0)(L^1)$.

Finally, observe that for each $f\in L^1$ we have
\[
\lim_{n\to \infty}(\B R(\lambda,\A))^n(f,0)=(0,0) \Leftrightarrow \lim_{n\to \infty}(\Psi\Psi(\lambda))^n \Psi R(\lambda,A_0)f=0.
\]
Thus, Theorem~\ref{th:equiv} completes the proof.
\end{proof}

\begin{remar}
Note that it follows from the proof of Theorem~\ref{th:main} that for each $r\in(0,1)$ the operator $(A_{r\Psi},\mathcal{D}(A_{r\Psi}))$ generates a substochastic semigroup $\{S_r(t)\}_{t\ge 0}$ on $L^1$, for any $t\ge 0$ and $f\in L^1$ the limit
$S_{\Psi}(t)f=\lim_{r\to 1}S_r(t)f$ exists in $L^1$ and defines a substochastic semigroup $\{S_{\Psi}(t)\}_{t\ge 0}$ and that $(G_{\Psi},\mathcal{D}(G_{\Psi}))$   is the generator of $\{S_{\Psi}(t)\}_{t\ge 0}$.
\end{remar}

\begin{corollar}\label{c:coro1}
Under the assumptions of Theorem~\ref{th:pertboun} the following holds:
\begin{enumerate}[\upshape(i)]
\item\label{it:inv} If there is $\lambda>0$ such that $(I_{\partial}-\Psi \Psi(\lambda))(L^1)=L^1_{\partial}$  then the operator $(A_{\Psi},\mathcal{D}(A_{\Psi}))$ as in \eqref{d:dombou} is the generator of a substochastic semigroup on $L^1$.

\item If there is $\lambda>0$ such that $(\Psi\Psi(\lambda))^nf_{\partial}\to 0$ as $n\to \infty$ for all $f_{\partial}\in L^1_{\partial}$ then the closure of the operator $(A_{\Psi},\mathcal{D}(A_{\Psi}))$ is the generator of a substochastic semigroup.
\end{enumerate}
\end{corollar}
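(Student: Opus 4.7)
My plan is to deduce both assertions directly from the corresponding equivalences in Theorem~\ref{th:pertboun}, viewing the corollary as a pair of sufficient conditions obtained by strengthening the ``some/all $\lambda>0$'' characterizations given there. No new functional-analytic work is needed; the argument is a matter of checking that the hypotheses of the corollary imply the abstract range/convergence conditions characterizing $G_\Psi=A_\Psi$ and $G_\Psi=\overline{A_\Psi}$ respectively.

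For part~\eqref{it:inv}, I will apply Theorem~\ref{th:pertboun}\eqref{i:ec1}, which states that $G_\Psi=A_\Psi$ if and only if $\Psi(\Ker(\Psi_0))\subseteq (I_\partial-\Psi\Psi(\lambda))(L^1_\partial)$ for some (equivalently every) $\lambda>0$. Reading the hypothesis as the surjectivity statement $(I_\partial-\Psi\Psi(\lambda))(L^1_\partial)=L^1_\partial$ (this is the only reading consistent with $I_\partial-\Psi\Psi(\lambda)$ acting on $L^1_\partial$), the range on the right is all of $L^1_\partial$. Since $\Ker(\Psi_0)\subseteq \mathcal{D}$ and $\Psi$ sends $\mathcal{D}$ into $L^1_\partial$, the inclusion $\Psi(\Ker(\Psi_0))\subseteq L^1_\partial=(I_\partial-\Psi\Psi(\lambda))(L^1_\partial)$ holds trivially. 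Theorem~\ref{th:pertboun}\eqref{i:ec1} then gives $G_\Psi=A_\Psi$, and since $G_\Psi$ generates a substochastic semigroup on $L^1$ by Theorem~\ref{th:pertboun}, so does $A_\Psi$.

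For part~(ii), I will apply Theorem~\ref{th:pertboun}\eqref{i:ec2}, according to which $G_\Psi=\overline{A_\Psi}$ if and only if $(\Psi\Psi(\lambda))^nf_\partial\to 0$ as $n\to\infty$ for all $f_\partial\in \Psi(\Ker(\Psi_0))$ and some/every $\lambda>0$. Since $\Psi(\Ker(\Psi_0))\subseteq L^1_\partial$, the hypothesis that this convergence holds for every $f_\partial\in L^1_\partial$ implies it a fortiori on the subset $\Psi(\Ker(\Psi_0))$. Theorem~\ref{th:pertboun}\eqref{i:ec2} then yields $\overline{A_\Psi}=G_\Psi$, which generates the substochastic semigroup constructed in Theorem~\ref{th:pertboun}.

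I do not expect any serious obstacle: the corollary is essentially a repackaging of Theorem~\ref{th:pertboun}\eqref{i:ec1}--\eqref{i:ec2} in the form of easy-to-check sufficient conditions, and both implications are one-line verifications of the abstract criteria. The only small care required is to interpret the range condition in \eqref{it:inv} as taking place in the boundary space $L^1_\partial$ on which the operator $I_\partial-\Psi\Psi(\lambda)$ is defined.
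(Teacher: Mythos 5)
Your proposal is correct and is exactly the argument the paper intends: the corollary is stated without proof as an immediate consequence of Theorem~\ref{th:pertboun}\eqref{i:ec1}--\eqref{i:ec2}, and your two one-line verifications (including the correct reading of the range hypothesis as surjectivity on $L^1_\partial$ and the observation that $\Psi(\Ker(\Psi_0))\subseteq L^1_\partial$) are precisely what is needed.
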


\begin{remar}
In  \cite{GMTKpositive}  we assumed   that there exists $\omega\in \mathbb{R}$ such that the operator $I_{\partial}-\Psi \Psi(\lambda)\colon L^1_{\partial}\to L^1_{\partial}$ is invertible with positive inverse for all $\lambda>\omega$.
Since $\Psi \Psi(\lambda)$ is a positive operator with $\ker(I_{\partial}-\Psi \Psi(\lambda))=\{0\}$,  the operator $I_{\partial}-\Psi \Psi(\lambda)$ is invertible with positive inverse if and only if
the spectral radius of the operator $\Psi \Psi(\lambda)$ is strictly smaller than 1,
or equivalently, see Remark~\ref{r:spectral},
\begin{equation}\label{eq:ssper}
\lim_{n\to \infty}\|(\Psi \Psi(\lambda))^n\|=0.
\end{equation}
\end{remar}

We conclude this section with the perturbation result from \cite{GMTK} being a consequence of Theorem~\ref{th:main}. Making use of Theorem~\ref{th:equiv}  we also obtain a condition for the closure property.

 \begin{theore}\label{th:pertbounB}
Let the operators $A$ and $\Psi_0$ satisfy Assumption \ref{a:S2} and let $(A,\Ker(\Psi_0))$ be the generator of a substochastic semigroup on $L^1$.
Suppose that $B\colon\mathcal{D}\to L^1$ and $\Psi\colon \mathcal{D}\to L^1_\partial$ are positive operators such that
\begin{equation}\label{eq:zero}
\int_{E} (Af +Bf) \, dm+\int_{E_{\partial}}(\Psi f-\Psi_0f)\, dm_{\partial}\le 0\quad \text{for all nonnegative } f\in \mathcal{D}.
\end{equation}
Then there exists an extension $G$ of $A_{\Psi}+B$ with $A_{\Psi}$ as in \eqref{d:dombou} generating a substochastic semigroup on $L^1$. We have $\mathcal{D}(A_\Psi)\subseteq \mathcal{D}(G)$ and
the resolvent operator of $G$ at $\lambda>0$ is given by
\begin{equation}\label{d:resg}
R(\lambda,G)f=\sum_{n=0}^{\infty}(R(\lambda,A_0)B+\Psi(\lambda)\Psi)^n R(\lambda,A_0)f,\quad f\in L^1,
\end{equation}
Moreover,
$G=\overline{A_{\Psi}+B}$ if and only if
\begin{equation}
\lim_{n\to \infty} B (R(\lambda,A_0)B+\Psi(\lambda)\Psi)^nf= 0 \quad \text{and} \quad \lim_{n\to \infty} \Psi (R(\lambda,A_0)B+\Psi(\lambda)\Psi)^nf= 0
\end{equation}
for all $f\in \Ker(\Psi_0)$ and for some/all $\lambda>0$.
\end{theore}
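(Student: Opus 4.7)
The plan is to reduce everything to Theorem~\ref{th:main} and Theorem~\ref{th:equiv} applied on $\BS = L^1 \times L^1_\partial$, mirroring what was done for $B = 0$ in Theorem~\ref{th:pertboun}. First I would define $\A$ with $\mathcal{D}(\A) = \mathcal{D} \times \{0\}$ by \eqref{d:AB} and set $\B(f, 0) = (Bf, \Psi f)$. The preceding lemma gives that $\A$ is resolvent positive with resolvent \eqref{eq:RA}, $\B$ is manifestly positive, and condition \eqref{eq:zero} is exactly \eqref{d:alpha} for this pair. Since $(A,\Ker(\Psi_0))$ generates a substochastic semigroup, $\Ker(\Psi_0) \subseteq \mathcal{D}$ is dense in $L^1$, so $\BS_0 = \overline{\mathcal{D}(\A)} = L^1 \times \{0\}$.

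Theorem~\ref{th:main} then delivers a substochastic semigroup on $\BS_0$ with generator $\G$ extending $(\A+\B)_{|}$. Computing $(\A+\B)(f,0) = (Af + Bf,\, \Psi f - \Psi_0 f)$, I see that this lies in $\BS_0$ iff $\Psi f = \Psi_0 f$, giving $\mathcal{D}((\A+\B)_{|}) = \mathcal{D}(A_\Psi) \times \{0\}$ and $(\A+\B)_{|}(f, 0) = ((A_\Psi + B)f, 0)$. Defining $G$ by $\G(f, 0) = (Gf, 0)$ then yields the desired extension of $A_\Psi + B$ with $\mathcal{D}(A_\Psi) \subseteq \mathcal{D}(G)$.

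For the resolvent identity, the computational heart is a short induction. Writing $M := R(\lambda, A_0)B + \Psi(\lambda)\Psi$, the key step
\[
R(\lambda,\A)\,\B R(\lambda,\A)(f, 0) = R(\lambda,\A)\bigl(B R(\lambda,A_0)f,\, \Psi R(\lambda,A_0)f\bigr) = (M R(\lambda,A_0)f,\, 0)
\]
bootstraps into $R(\lambda, \A)(\B R(\lambda, \A))^n(f, 0) = (M^n R(\lambda, A_0)f,\, 0)$ for every $n \ge 0$, and then summing as in \eqref{d:resgG} gives precisely \eqref{d:resg}. The same computation also yields, for $n \ge 1$,
\[
(\B R(\lambda, \A))^n(f, 0) = \bigl(B M^{n-1} R(\lambda, A_0)f,\ \Psi M^{n-1} R(\lambda, A_0)f\bigr).
\]

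For the closure characterization, I invoke Theorem~\ref{th:equiv}: $\G = \overline{(\A+\B)_{|}}$ on $\BS_0$ iff $(\B R(\lambda, \A))^n u \to 0$ for all $u \in \BS_0$. Applied to $u = (f, 0)$ with $f \in L^1$ and using the formula just derived, this reduces to $BM^{n-1}R(\lambda,A_0)f \to 0$ in $L^1$ and $\Psi M^{n-1}R(\lambda,A_0)f \to 0$ in $L^1_\partial$. After reindexing and using $R(\lambda, A_0)(L^1) = \Ker(\Psi_0)$, this is exactly the stated double limit condition on $\Ker(\Psi_0)$, and the translation $G = \overline{A_\Psi + B}$ is immediate from the componentwise correspondence $\G(f, 0) = (Gf, 0)$. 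The only delicate point is really the bookkeeping of iterated $\B R(\lambda, \A)$; no genuinely new obstacle beyond the $B = 0$ case of Theorem~\ref{th:pertboun} arises.
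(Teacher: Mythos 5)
Your proposal is correct and follows essentially the same route as the paper: define $\A$ and $\B$ on $L^1\times L^1_\partial$ as in \eqref{d:AB0}, verify the hypotheses of Theorem~\ref{th:main} via the lemma and \eqref{eq:zero}, compute $R(\lambda,\A)\B(f,0)=\bigl((R(\lambda,A_0)B+\Psi(\lambda)\Psi)f,0\bigr)$ to obtain \eqref{d:resg}, and read off the closure criterion from \eqref{e:merg} of Theorem~\ref{th:equiv}. You merely spell out the induction on the iterates $(\B R(\lambda,\A))^n(f,0)$ and the identification $\mathcal{D}((\A+\B)_{|})=\mathcal{D}(A_\Psi)\times\{0\}$, which the paper leaves implicit.
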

\begin{proof}
Let  the operators $\A$ and $\B$ be as in \eqref{d:AB0}. Then all assumptions of Theorem~\ref{th:main} hold.
We have $R(\lambda,\A)(L^1\times L^1_{\partial})=\mathcal{D}\times \{0\}$ and
\[
R(\lambda,\A)\B (f,0)=((R(\lambda,A_0)B+\Psi(\lambda)\Psi)f,0),\quad f\in \mathcal{D}.
\]
Thus the formula for $R(\lambda,G)$ follows from \eqref{d:resg} and the characterization of the closure property follows from condition \eqref{e:merg}.
\end{proof}

Theorem~\ref{th:pertbounB} together with Theorem \ref{th:equiv} implies the following result.
\begin{corollar}\label{c:petbounB}
Let the operators $A$ and $\Psi_0$ satisfy Assumption \ref{a:S2} and let $(A,\Ker(\Psi_0))$ be the generator of a substochastic semigroup.
Suppose that $B\colon\mathcal{D}\to L^1$ and $\Psi\colon \mathcal{D}\to L^1_\partial$ are positive operators such that
\begin{equation}\label{eq:zeroo}
\int_{E} (Af +Bf) \, dm+\int_{E_{\partial}}(\Psi f-\Psi_0f)\, dm_{\partial}= 0\quad \text{for all nonnegative } f\in \mathcal{D}.
\end{equation}
Then the substochastic semigroup $\{P(t)\}_{t\ge 0}$ on $L^1$ generated by an extension $G$ of $A_{\Psi}+B$ with $A_{\Psi}$ as in \eqref{d:dombou} is stochastic if and only if $G=\overline{A_{\Psi}+B}$.
\end{corollar}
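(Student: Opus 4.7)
The plan is to reduce the statement to the abstract equivalence (i)$\Leftrightarrow$(iii) of Theorem~\ref{th:equiv} via the same lifting to $\BS=L^1\times L^1_{\partial}$ that was used in the proof of Theorem~\ref{th:pertbounB}. Concretely, I would define $\mathcal{A}$ and $\mathcal{B}$ on $\mathcal{D}(\mathcal{A})=\mathcal{D}\times\{0\}$ by \eqref{d:AB0}, so that the functional $\alpha$ from \eqref{d:alp} on the AL space $\BS$ reads $\alpha(f,f_\partial)=\int_E f\,dm+\int_{E_\partial}f_\partial\,dm_\partial$ on the positive cone.

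First I would verify that the equality hypothesis \eqref{eq:zeroo} is exactly the condition \eqref{d:alphaz} in the abstract setting: for $u=(f,0)\in\mathcal{D}(\mathcal{A})_+$ one has
\[
\alpha(\mathcal{A}u+\mathcal{B}u)=\int_E(Af+Bf)\,dm+\int_{E_\partial}(\Psi f-\Psi_0 f)\,dm_\partial,
\]
which vanishes by \eqref{eq:zeroo}. Together with the proof of Theorem~\ref{th:pertbounB}, all the assumptions of Theorem~\ref{th:main} hold, and so does the additional hypothesis \eqref{d:alphaz} of Theorem~\ref{th:equiv}.

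Next I would check that, under the canonical isometric isomorphism $L^1\ni f\mapsto(f,0)\in\BS_0=L^1\times\{0\}$, the part $(\mathcal{A}+\mathcal{B})_{|}$ corresponds to the operator $A_\Psi+B$. Indeed, for $u=(f,0)\in\mathcal{D}(\mathcal{A})$ we have $\mathcal{A}u+\mathcal{B}u=(Af+Bf,\Psi f-\Psi_0 f)$, and this lies in $\BS_0$ precisely when $\Psi f=\Psi_0 f$, i.e.\ when $f\in\mathcal{D}(A_\Psi)$; on this domain the operator acts as $f\mapsto Af+Bf=A_\Psi f+Bf$. Because the embedding is an isometry, taking closures in $\BS_0$ and in $L^1$ commutes with this identification, so $\mathcal{G}=\overline{(\mathcal{A}+\mathcal{B})_{|}}$ on $\BS_0$ if and only if $G=\overline{A_\Psi+B}$ on $L^1$. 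Similarly, stochasticity of $\{\mathcal{P}(t)\}_{t\ge 0}$ on $\BS_0$ is the same as stochasticity of $\{P(t)\}_{t\ge 0}$ on $L^1$, since for $(f,0)\in\BS_0\cap\BS_+$ we have $\|(f,0)\|=\|f\|_{L^1}$.

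Finally, applying the equivalence (i)$\Leftrightarrow$(iii) of Theorem~\ref{th:equiv}, the semigroup $\{P(t)\}_{t\ge 0}$ is stochastic if and only if $G=\overline{A_\Psi+B}$, which is the desired conclusion. No step looks delicate here: once the identifications above are made, the result is essentially a direct transcription of Theorem~\ref{th:equiv}. The only point that requires a moment of care is the matching of the closures, and there the isometric nature of the embedding $f\mapsto(f,0)$ does all the work.
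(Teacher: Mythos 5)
Your proposal is correct and follows exactly the route the paper intends: the corollary is stated there as an immediate consequence of Theorem~\ref{th:pertbounB} (which performs the same lifting to $\BS=L^1\times L^1_{\partial}$ via \eqref{d:AB0} and identifies the closure condition) combined with the equivalence (i)$\Leftrightarrow$(iii) of Theorem~\ref{th:equiv} under \eqref{d:alphaz}. Your explicit verification that \eqref{eq:zeroo} is \eqref{d:alphaz} for the lifted operators, and that the isometric embedding $f\mapsto(f,0)$ matches $(\A+\B)_{|}$ with $A_\Psi+B$, closures with closures, and stochasticity with stochasticity, is precisely the bookkeeping the paper leaves to the reader.
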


\section{Transport equations with conservative boundary conditions}\label{s:4}

\subsection{General assumptions}

Here we consider the general setting for PDMPs as introduced in \cite{GMTK}. Let $\widetilde{E}$ be a separable metric space and let $\phi=\{\phi_t\}_{t\ge 0}$  be a flow  on $\widetilde{E}$.
Our basic assumption is the following.
 \begin{assumptio} \label{a:nons}
There exists a \emph{measurable cocycle} $\{J_t\}_{t\in \mathbb{R}}$ of $\phi$ on $\widetilde{E}$, i.e. a family of Borel measurable nonnegative functions  satisfying the following conditions
\begin{equation*}\label{d:jacob}
J_0(x)=1,\quad J_{t+s}(x)=J_{t}(\phi_{s}(x))J_{s}(x),\quad s,t\in \mathbb{R}, x\in \widetilde{E},
\end{equation*}
and there exists a $\sigma$-finite  Radon measure $m$ on the Borel $\sigma$-algebra $\mathcal{B}(\widetilde{E})$ with $m(\partial E)=0$ such that
\begin{equation*}\label{d:RND}
(m\circ \phi_t^{-1})(B )=m(\phi_t^{-1}(B ))=\int_{B}  J_{-t}(x)m(dx),\quad t\in \mathbb{R}, B \in \mathcal{B}(\widetilde{E}).
\end{equation*}
\end{assumptio}

\begin{remar}
Assumption \ref{a:nons} implies that for each $t$ the transformation $\phi_t\colon \widetilde{E}\to \widetilde{E}$ is \emph{non-singular with respect to the measure $m$} (\cite{almcmbk94}), i.e. $m\circ \phi_t^{-1}$ is absolutely continuous with respect to $m$. Then $J_{-t}$ is the Radon-Nikodym derivative $\frac{d m\circ \phi_t^{-1}}{dm}$.
\end{remar}

\begin{remar} Consider $\widetilde{E}=\mathbb{R}^N$ as in the Introduction and a flow $\phi$ solving \eqref{e:vjp}.
If we take as $m$ the Lebesgue measure on $\mathbb{R}^N$ then  $J_t(x)$  is the absolute value of the determinant of the derivative of the mapping $x\mapsto \phi_t(x)$, by the change of variables formula.
By Liouville's theorem, it is also given by
\begin{equation}\label{d:Liou}
J_{t}(x)
=\exp\left\{\int_0^{t} a(\phi_{r}(x))dr\right\},\quad  t\in \mathbb{R},
\end{equation}
where $a$ is the divergence of  $b$. In particular, if $b$ is globally Lipschitz 
then
the function $a$ is the divergence of the vector field $b$.
Note that in \cite{arlottibanasiaklods09} it is assumed that there exists a Radon measure $m$ on $\mathbb{R}^N$ that is invariant for the flow $\{\phi_t\}_{t\in \mathbb{R}}$, i.e. $m(\phi_t^{-1}(B))=m(B)$ for all Borel subsets of $\mathbb{R}^N$ and all $t\in \mathbb{R}$. This corresponds to $J_t(x)\equiv 1$ in Assumption \ref{a:nons}, so that $a\equiv 0$ and we have the divergence free case.
\end{remar}

We define the \emph{hitting times} of the boundaries $\Gamma^{\pm}$ by
\begin{equation*}\label{d:exit}
t_{\pm}(x)=\inf\{t>0:\phi_{\pm t}(x)\in \Gamma^{\pm} \} 
\end{equation*}
with the convention that $\inf\emptyset=\infty$. We set $t_{\pm}(x)=0$ for $x\in \Gamma^{\pm} $ and we extend the above formula to  points from the boundaries $\Gamma^{\mp}$.
It is shown in \cite{arlottibanasiaklods07} that if $\widetilde{E}=\mathbb{R}^N$ and  $a$ in \eqref{d:Liou} is bounded then there exist unique Borel measures satisfying the following.
\begin{assumptio} \label{a:dive}
There exist  Borel measures $m^{\pm}$ on $\Gamma^{\pm} $ such that
 for any nonnegative and Borel measurable $f$, we have
\begin{equation*}
 \int_{E_+}f(x)\,m(dx)=\int_{\Gamma^{+} }\int_{0}^{t_{-}(z)} f(\phi_{-s}(z))J_{-s}(z)\,ds\, m^{+}(dz)
\end{equation*}
and
\begin{equation*}
 \int_{E_{-}}f(x)\,m(dx)=\int_{\Gamma^{-} }\int_{0}^{t_{+}(z)} f(\phi_s(z))J_s(z)\,ds\, m^{-}(dz),
\end{equation*}
where
\begin{equation*}
E_{\pm}=\{x\in E: 0<t_{\pm}(x)<\infty\}\quad \text{and}\quad E=E^0\cup \Gamma^{-}\setminus \Gamma^{-}\cap\Gamma^{+}.
\end{equation*}
\end{assumptio}

As concern the Borel measurable function $q\colon E\to [0,\infty)$ we additionally impose the following.
\begin{assumptio} \label{a:varp}
For each $x$  the function $\mathbb{R}\ni t\mapsto \int_0^tq(\phi_{r}(x))dr$ is absolutely continuous, where we
extend $q$ beyond $E$  by setting $q(x)=0$ for $x\not\in E$.
\end{assumptio}

\subsection{Existence of solutions}

We first consider well-posedness of \eqref{e:eq} with $\Psi\equiv 0$ and $B\equiv 0$. 
To describe the transport operator we use the approach of~\cite{arlottibanasiaklods07,arlottibanasiaklods09} as extended in \cite{GMTK}.
Let $\mathfrak{N}$ be the set of all measurable and bounded functions $\psi  \colon E\to \mathbb{R}$ with compact support in $E^0$ and such that for any $x\in E$ the function
\[
(-t_{-}(x),t_{+}(x))\ni t\mapsto \psi  (\phi_t(x))
\]
is continuously differentiable with bounded and measurable derivative at $t=0$, i.e. the mapping
\[
 x\mapsto \frac{d(\psi  \circ \phi_t)}{dt}\Big|_{t=0}(x)
\]
is bounded and measurable.
We define  the \emph{maximal transport operator} $T_{\max}$ on a set $\mathcal{D}_{\max}\subseteq L^1(E,m)$ as follows.  We say that
$ f\in \mathcal{D}_{\max}$ if there exists $g\in L^1(E,m)$ such that
\begin{equation}\label{d:Tm}
\int_{E} g(x)\psi  (x)m(dx)=\int_{E} f(x)\frac{d(\psi  \circ \phi_t)}{dt}\Big|_{t=0}(x)m(dx)
\end{equation}
for all $\psi  \in \mathfrak{N}$ and we set $T_{\max} f:=g$.
If Assumptions~\ref{a:nons} hold  and if $f\in \mathcal{D}_{\max}$ then there exists  a representative $f^\sharp$ of $f$ such that for $m$-a.e. $x\in E$ and any $-t_{-}(x)<t_1\le t_2<t_{+}(x)$ we have
\begin{equation*}
f^{\sharp}(\phi_{t_1}(x))J_{t_1}(x)-f^{\sharp}(\phi_{t_2}(x))J_{t_2}(x)=\int_{t_1}^{t_2}T_{\max} f(\phi_s(x))J_s(x)\,ds.
\end{equation*}

Given  $f\in L^1(E,m)$ we define its traces $\Tr^{\pm}f$ on the boundaries $\Gamma^{\pm} $ by the
the pointwise limits
\begin{equation}\label{d:traces}
\Tr^{\pm}f(z)=\lim_{s\to 0^{+}}f(\phi_{\mp s}(z))J_{\mp s}(z)
\end{equation}
provided that the limits exist for $m^{\pm}$-a.e. $z\in \Gamma^{\pm} $. It can be shown~\cite{arlottibanasiaklods09,GMTK} that $\Tr^{\pm}f$ exist for $f\in \mathcal{D}_{\max}$. If $\Gamma^{\pm} =\emptyset$ then we set $\Tr^{\pm}= 0$.   We write
\[
\mathcal{D}(\Tr^{\pm})=\{f\in L^1(E,m):\Tr^{\pm}f\in L^1(\Gamma^{\pm} ,m^{\pm})\}.
\]
Note that the traces $\Tr^{\pm}\colon \mathcal{D}(\Tr^{\pm})\to L^1(\Gamma^{\pm} ,m^{\pm})$ are linear positive operators.

The following result corresponds to   Green's identity as in \cite[Proposition 4.6]{arlottibanasiaklods07} and its proof is given in \cite{GMTK}. Formula \eqref{e:Afin0} explains the interplay between the transport operator, the boundary measures and the traces, giving conservation of mass.

\begin{theore}\label{l:Green0} Suppose that Assumptions~\ref{a:nons}--\ref{a:dive} hold. Let $(T_{\max},\mathcal{D}_{\max})$ be the maximal transport operator  as in \eqref{d:Tm}.
If $f\in \mathcal{D}_{\max}$ is such that  $\Tr^{-}f\in L^1(\Gamma^{-} ,m^-)$ then  $\Tr^+f\in L^1(\Gamma^{+} ,m^+)$
 and
\begin{equation}\label{e:Afin0}
\int_{E} T_{\max} f(x)m(dx)=\int_{\Gamma^{-} }\Tr^{-}f(x)m^{-}(dx)-\int_{\Gamma^{+} }\Tr^{+}f(x)m^{+}(dx).
\end{equation}
\end{theore}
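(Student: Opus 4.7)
My plan is to derive Green's identity by combining a pointwise identity along each flow orbit, obtained from the absolute-continuity representation of elements of $\mathcal{D}_{\max}$ stated just before the theorem, with the boundary integration formulas of Assumption~\ref{a:dive}.

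First, fix the representative $f^\sharp$ and, for $m^-$-a.e.\ $z\in\Gamma^-$, set $g_z(t)=f^\sharp(\phi_t(z))J_t(z)$ on $(0,t_+(z))$. The representation formula makes $g_z$ absolutely continuous with $g_z(0^+)=\Tr^-f(z)$ and $g_z'(t)=-T_{\max}f(\phi_t(z))J_t(z)$. Using the cocycle identity for $J$, when $t_+(z)<\infty$ the other-endpoint limit equals $\Tr^+f(\phi_{t_+(z)}(z))J_{t_+(z)}(z)$; when $t_+(z)=\infty$, Assumption~\ref{a:dive} applied to $|f|$ and $|T_{\max}f|$ places both $g_z$ and $g_z'$ in $L^1(0,\infty)$, so $g_z(\infty)$ exists and must equal $0$ because an $L^1$ function with a finite limit at infinity has that limit equal to zero. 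This yields the pointwise identity
\[
\Tr^-f(z)-\mathbf{1}_{\{t_+(z)<\infty\}}\Tr^+f(\phi_{t_+(z)}(z))J_{t_+(z)}(z)=\int_0^{t_+(z)}T_{\max}f(\phi_s(z))J_s(z)\,ds,
\]
and a symmetric identity follows for $m^+$-a.e.\ $z\in\Gamma^+$ by tracing the orbit backward.

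Next, I need the boundary change of variables
\[
\int_{\Gamma^-_f}h(\phi_{t_+(z)}(z))J_{t_+(z)}(z)\,m^-(dz)=\int_{\Gamma^+_f}h(w)\,m^+(dw),\qquad h\ge 0,
\]
where $\Gamma^\pm_f=\{z\in\Gamma^\pm:t_\mp(z)<\infty\}$. I obtain it by equating both halves of Assumption~\ref{a:dive} on $E_-\cap E_+$ applied to $F(x)=h(\phi_{t_+(x)}(x))\rho_\epsilon(t_+(x))$, where $\rho_\epsilon$ is an approximation of $\delta_{0^+}$: the $\Gamma^+$-representation collapses to $\int_{\Gamma^+_f}h\,dm^+$, the $\Gamma^-$-representation collapses to the left-hand side. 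Integrating the pointwise identities against $m^\pm$, converting the inner $ds$-integrals into $m$-integrals over $E_\pm$ via Assumption~\ref{a:dive}, and then applying this change of variables produces
\[
\int_{E_-}T_{\max}f\,dm=\int_{\Gamma^-}\Tr^-f\,dm^--\int_{\Gamma^+_f}\Tr^+f\,dm^+
\]
together with a symmetric formula for $\int_{E_+}T_{\max}f\,dm$ and the $L^1$-estimate $\|\Tr^+f\|_{L^1(\Gamma^+,m^+)}\le \|\Tr^-f\|_{L^1(\Gamma^-,m^-)}+\|T_{\max}f\|_{L^1(E,m)}$, hence $\Tr^+f\in L^1(\Gamma^+,m^+)$. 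Inclusion--exclusion on $E_-\cup E_+$, using the $E_-\cap E_+$ formula (also extracted from the pointwise identity restricted to $\Gamma^-_f$), assembles everything into the desired identity on $E_-\cup E_+$.

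The main obstacle is then to show $\int_{E\setminus(E_-\cup E_+)}T_{\max}f\,dm=0$, i.e.\ that complete orbits carry no flux. Orbit-by-orbit this is clear: on any such orbit the same reasoning places $g(t)=f^\sharp(\phi_t(x))J_t(x)$ and $g'$ in $L^1(\mathbb{R})$, so $g(\pm\infty)=0$ and $\int_{\mathbb{R}}g'(t)\,dt=0$. The subtlety is that Assumption~\ref{a:dive} supplies no cross-section for this piece. I expect the cleanest route is an approximation in the weak defining relation~\eqref{d:Tm}: choosing $\psi_n\in\mathfrak{N}$ with $\psi_n\uparrow\mathbf{1}_E$ forces $\int_E T_{\max}f\,\psi_n\,dm\to\int_E T_{\max}f\,dm$, while the derivative $\tfrac{d(\psi_n\circ\phi_t)}{dt}\big|_{t=0}$ concentrates on shrinking tubes around $\Gamma^\pm$ and, paired with $f$, reproduces exactly the two boundary integrals; the local nonsingularity furnished by Assumption~\ref{a:nons} should make this localisation rigorous and simultaneously force the complete-orbit contribution to vanish in the limit.
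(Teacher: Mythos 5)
The paper does not actually contain a proof of this theorem --- it is deferred to the reference [GMTK] --- so your argument can only be judged on its own terms. Its main line is the standard and correct one: the orbit-wise fundamental theorem of calculus for $g_z(t)=f^\sharp(\phi_t(z))J_t(z)$, identification of the endpoint limits with the traces through the cocycle identity, vanishing of $g_z$ at an infinite endpoint because $g_z$ and $g_z'$ lie in $L^1(0,\infty)$ for $m^-$-a.e.\ such $z$ (by Fubini applied to the disintegration of $|f|$ and $|T_{\max}f|$), and the bookkeeping over $E_-$, $E_+$, $E_-\cap E_+$ via Assumption~\ref{a:dive}; the resulting bound $\|\Tr^+f\|_{L^1(\Gamma^+,m^+)}\le\|\Tr^-f\|_{L^1(\Gamma^-,m^-)}+\|T_{\max}f\|_{L^1(E,m)}$ is exactly what is needed. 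Two smaller points deserve attention: transferring the ``$m$-a.e.'' validity of the representation formula into ``$m^\pm$-a.e.\ on $\Gamma^\pm$'' should be spelled out (again via Assumption~\ref{a:dive}), and your $\rho_\epsilon$-derivation of the change of variables between $m^-$ and $m^+$ tacitly uses continuity of $t\mapsto J_t(x)$, which Assumption~\ref{a:nons} does not provide; this is avoidable by testing Assumption~\ref{a:dive} against $F(x)=h(\phi_{t_+(x)}(x))J_{t_+(x)}(x)\,w\bigl(t_-(x)/(t_-(x)+t_+(x))\bigr)/(t_-(x)+t_+(x))$ with $\int_0^1w=1$, for which both disintegrations collapse exactly and no limit is needed.

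The genuine gap is the one you flag yourself: the contribution of $E_\infty:=E\setminus(E_-\cup E_+)$, which up to an $m$-null set is the union of complete orbits with $t_-=t_+=\infty$. Your proposed repair --- letting $\psi_n\uparrow\mathbf{1}_E$ in \eqref{d:Tm} --- is the wrong tool: elements of $\mathfrak{N}$ are compactly supported in $E^0$, so for each fixed $n$ the identity \eqref{d:Tm} is blind to the boundary (integration by parts along each orbit merely reproduces the defining relation with vanishing endpoint terms), and recovering the boundary integrals in the limit would require quantitative control of the transition layer of $\psi_n$ that amounts to re-proving the theorem; worse, it gives no handle on $E_\infty$, since on a complete orbit confined to a compact subset of $E^0$ one may have $\psi_n\equiv1$ eventually. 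A direct argument should replace this step: $E_\infty$ is flow-invariant, and by Assumption~\ref{a:nons} the transfer operators $U_th=(h\circ\phi_{-t})J_{-t}$ satisfy $\int_{E_\infty}U_th\,dm=\int_{E_\infty}h\,dm$ for $h\ge0$; integrating the representation formula in the form $U_tf-f=\int_0^tU_s(T_{\max}f)\,ds$ over $E_\infty$ yields $\bigl|t\int_{E_\infty}T_{\max}f\,dm\bigr|\le2\|f\|_{L^1(E,m)}$ for every $t>0$, hence $\int_{E_\infty}T_{\max}f\,dm=0$. With that substitution your proof closes.
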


We now define the operator $A\colon \mathcal{D}\to L^1(E,m)$ by
\begin{equation}\label{d:operatorA}
Af =T_{\max} f -q f,\quad f\in \mathcal{D},
\end{equation}
where the transport operator $T_{\max}$ is as in \eqref{d:Tm}, $q\colon E\to [0,\infty)$ is a Borel measurable function and
\begin{equation}\label{d:domainopA}
\mathcal{D}=\{f\in \mathcal{D}_{\max}: \Tr^{-}f \in L^1(\Gamma^{-} ,m^{-}), q f\in L^1(E,m)\}.
\end{equation}
The next result shows that a restriction of the operator $A$ is the generator of a substochastic semigroup and that Assumption \ref{a:S2} holds. Its proof is given in \cite{GMTK}.

\begin{theore}\label{t:gst} Suppose that Assumptions~\ref{a:nons} and \ref{a:varp} hold. Let $(A,\mathcal{D})$ be as in \eqref{d:operatorA}--\eqref{d:domainopA} and let $\Psi_0(f)=\Tr^{-}f$ for $f\in \mathcal{D}$.
Then the operator $(A_0,\mathcal{D}(A_0))=(A,\Ker(\Psi_0))$
is the generator of the substochastic semigroup $\{S(t)\}_{t\ge 0}$ on $L^1(E,m)$ given by
\begin{equation*}
S(t)f(x)=\mathbf{1}_{E}(\phi_{-t}(x))f(\phi_{-t}(x))J_{-t}(x)e^{-\int_0^t q(\phi_{-r}(x))dr}
\end{equation*}
for $t>0$, $x\in E$, $f\in L^1(E,m)$.
For each $\lambda>0$  the right-inverse $\Psi(\lambda)$ of the operator $\Psi_0$  restricted to $\Ker(\lambda -A)$ is a positive operator on $L^{1}(\Gamma^{-} ,m^{-})$ of the form
\begin{equation}\label{e:psilam}
\Psi(\lambda)f_{\partial}(x)=e^{-\lambda t_{-}(x)-\int_0^{t_{-}(x)}q(\phi_{-r}(x))dr}f_{\partial}(\phi_{-t_{-}(x)}(x))J_{-t_{-}(x)}(x),\quad x\in E,
\end{equation}
$f_{\partial}\in L^{1}(\Gamma^{-} ,m^{-})$
and
for any  $\lambda>0$  we have $\Tr^{+}\Psi(\lambda)f_{\partial}\in  L^1(\Gamma^{+},m^+)$ for $f_\partial \in L^1(\Gamma^{-} ,m^{-})$ with
\begin{equation}\label{e:gppsilam}
\Tr^{+}\Psi(\lambda)f_{\partial}(z)=e^{-\lambda t_{-}(z)-\int_0^{t_{-}(z)} q(\phi_{-r}(z))dr}f_{\partial}(\phi_{-t_{-}(z)}(z))J_{-t_{-}(z)}(z),\quad z\in \Gamma^{+}.
\end{equation}
Moreover, if $f\in L^1(E,m)$ then
\[
R(\lambda,A_0)f(x)=\int_0^{t_{-}(x)}e^{-\lambda t-\int_0^{t}q(\phi_{-r}(x))dr}f(\phi_{-t}(x))J_{-t}(x)dt, \quad x\in E,
\]
 and $\Tr^{+}R(\lambda,A_0)f\in  L^1(\Gamma^{+},m^+)$ with
\[
\Tr^{+}R(\lambda,A_0)f(z)=\int_{0}^{t_{-}(z)}e^{-\lambda t-\int_0^{t}q(\phi_{-r}(z))dr}f(\phi_{-t}(z))J_{-t}(z)dt,\quad z\in \Gamma^{+} , f\in L^1(E,m).
\]
\end{theore}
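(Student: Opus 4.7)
The plan is to construct the semigroup $\{S(t)\}_{t\ge 0}$ directly from the explicit formula in the statement, verify that it is a substochastic $C_0$-semigroup on $L^{1}(E,m)$, and then identify its generator as $(A_0,\mathcal{D}(A_0))$ through the Laplace transform. The right-inverse $\Psi(\lambda)$ is then constructed by propagating boundary data from $\Gamma^{-}$ along trajectories of the flow.

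First I would check that $\{S(t)\}$ is a substochastic $C_0$-semigroup. Positivity is immediate from the formula. The semigroup identity $S(t+s)=S(t)S(s)$ follows from the flow relation $\phi_{-(t+s)}=\phi_{-s}\circ\phi_{-t}$, the cocycle identity $J_{-(t+s)}(x)=J_{-s}(\phi_{-t}(x))J_{-t}(x)$, and additivity of $\int_0^t q\circ\phi_{-r}\,dr$ along trajectories. Contractivity uses Assumption~\ref{a:nons}: the change-of-variables $\int_E g(\phi_{-t}(x))J_{-t}(x)\,m(dx)\le\int_E g\,dm$ for nonnegative $g$, combined with $e^{-\int_0^t q\circ\phi_{-r}\,dr}\le 1$. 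Strong continuity at $t=0$ is verified first on a dense class of regular functions (e.g.\ those in $\mathfrak{N}$), using Assumption~\ref{a:varp}, and then extended by density. The Laplace transform $R(\lambda,A_0)f(x)=\int_0^{\infty}e^{-\lambda t}S(t)f(x)\,dt$ then gives the resolvent: the indicator $\mathbf{1}_E(\phi_{-t}(x))$ truncates the integration at $t=t_{-}(x)$, producing the stated formula.

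Next I would identify the generator of $\{S(t)\}$ as $(A_0,\mathcal{D}(A_0))$ by proving that for every $f\in L^{1}(E,m)$ the function $g:=R(\lambda,A_0)f$ lies in $\mathcal{D}$, satisfies $\Tr^{-}g=0$, and $(\lambda-A)g=f$. The vanishing trace follows since $t_{-}(x)\to 0$ as $x$ tends to $\Gamma^{-}$ along trajectories. Membership in $\mathcal{D}_{\max}$ is verified by substituting the explicit integral formula for $g$ into \eqref{d:Tm}, applying Fubini, and using the characteristic identity $\frac{d}{dt}[h(\phi_t(x))J_t(x)]=-(T_{\max}h)(\phi_t(x))J_t(x)$, with Assumption~\ref{a:varp} permitting differentiation of the $q$-factor. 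The construction of $\Psi(\lambda)$ is then direct: solving the ODE $h'(u)=-(\lambda+q(\phi_u(z)))h(u)$ for $h(u)=g(\phi_u(z))J_u(z)$ along the trajectory emanating from $z\in\Gamma^{-}$ with initial value $h(0^+)=f_{\partial}(z)$, and rewriting the solution at $x=\phi_{t_{-}(x)}(\phi_{-t_{-}(x)}(x))$ via the cocycle identity $J_{t_{-}(x)}(\phi_{-t_{-}(x)}(x))J_{-t_{-}(x)}(x)=1$, yields \eqref{e:psilam}. Integrability then follows from Assumption~\ref{a:dive}:
\[
\|\Psi(\lambda)f_{\partial}\|_{L^{1}}=\int_{\Gamma^{-}}|f_{\partial}(z)|\int_0^{t_{+}(z)}e^{-\lambda s-\int_0^s q(\phi_{r}(z))dr}\,ds\,m^{-}(dz)\le\lambda^{-1}\|f_{\partial}\|.
\]
The identities $\Psi_0\Psi(\lambda)f_{\partial}=f_{\partial}$ and \eqref{e:gppsilam} are obtained by passing to the limit $s\to 0^+$ in the definition \eqref{d:traces} applied to $\Psi(\lambda)f_{\partial}$ at $z\in\Gamma^{-}$ (using $t_{-}(\phi_s(z))=s$) and at $z\in\Gamma^{+}$ (using $t_{-}(\phi_{-s}(z))=t_{-}(z)-s$).

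The main obstacle is the rigorous verification that $R(\lambda,A_0)f\in\mathcal{D}_{\max}$ in the sense of \eqref{d:Tm}. Since $\mathcal{D}_{\max}$ is defined only through a test-function identity, one must justify a Fubini interchange and differentiate along the flow a $q$-weighted exponential integral whose $t$-regularity is precisely the absolute continuity granted by Assumption~\ref{a:varp}. Assumptions~\ref{a:nons}--\ref{a:varp} are calibrated for exactly this step, and combined with Green's identity (Theorem~\ref{l:Green0}) they convert the formal characteristic-method computation into a rigorous argument.
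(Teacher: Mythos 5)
The paper does not prove Theorem~\ref{t:gst} at all: it states explicitly that ``its proof is given in \cite{GMTK}'', so there is no in-text argument to compare yours against. Your blind proposal follows the standard characteristics strategy that \cite{GMTK} and \cite{arlottibanasiaklods07,arlottibanasiaklods09} use (explicit semigroup, Laplace transform for the resolvent, propagation of boundary data along trajectories for $\Psi(\lambda)$, Green's identity for the trace integrability), and your identification of the technical crux --- verifying membership in $\mathcal{D}_{\max}$ through the weak identity \eqref{d:Tm} via Fubini and the absolute continuity of $t\mapsto\int_0^t q(\phi_r(x))\,dr$ --- is accurate. Note, however, that you invoke Assumption~\ref{a:dive} for the bound $\|\Psi(\lambda)f_{\partial}\|\le\lambda^{-1}\|f_{\partial}\|$ and for Green's identity, whereas the theorem as stated lists only Assumptions~\ref{a:nons} and~\ref{a:varp}; the existence of the boundary measures $m^{\pm}$ is indeed needed for the statement to make sense, so this is a gap in the hypotheses of the statement rather than in your argument, but you should say so.

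There is one genuine logical gap in your outline. Showing that $g:=R(\lambda,A_0)f$ lies in $\mathcal{D}$, has $\Tr^{-}g=0$ and satisfies $(\lambda-A)g=f$ only proves that $\lambda-A_0$ is surjective and that $A_0$ \emph{extends} the generator $\G$ of $\{S(t)\}_{t\ge0}$ (since $R(\lambda,\G)f=\int_0^\infty e^{-\lambda t}S(t)f\,dt=g$ forces $\mathcal{D}(\G)\subseteq\mathcal{D}(A_0)$ and $A_0|_{\mathcal{D}(\G)}=\G$). To conclude $\G=A_0$ you must also prove that $\lambda-A_0$ is injective on $\mathcal{D}(A_0)$: if $u\in\mathcal{D}(A_0)$ and $(\lambda-A_0)u=0$ with $u\neq0$, then $A_0$ would be a proper extension of $\G$ and would not itself be a generator. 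This uniqueness step is not automatic. It requires the representation formula $u^{\sharp}(\phi_{t_1}(x))J_{t_1}(x)-u^{\sharp}(\phi_{t_2}(x))J_{t_2}(x)=\int_{t_1}^{t_2}T_{\max}u(\phi_s(x))J_s(x)\,ds$, which shows that $h(t)=u^{\sharp}(\phi_t(x))J_t(x)$ satisfies $h'(t)=(\lambda+q(\phi_t(x)))h(t)$ and hence grows at least like $e^{\lambda t}$ forward in time; one then rules out $h\not\equiv0$ by combining the zero incoming trace on trajectories emanating from $\Gamma^{-}$ with the integrability of $u$ along forward trajectories (via the disintegration of $m$ in Assumption~\ref{a:dive} and the change of variables of Assumption~\ref{a:nons}) for trajectories with $t_{-}(x)=\infty$ or $t_{+}(x)=\infty$. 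Without this argument the identification of the generator, and hence the whole theorem, is incomplete. The same remark applies implicitly to your construction of $\Psi(\lambda)$: solving the ODE along characteristics produces a candidate, but you still need to verify that $\Psi(\lambda)f_{\partial}$ satisfies the distributional identity \eqref{d:Tm} and that $q\,\Psi(\lambda)f_{\partial}\in L^1(E,m)$, which is the same kind of work you acknowledge only for $R(\lambda,A_0)f$.
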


\begin{remar}
In  the notation used in \cite{arlottibanasiaklods09,arlottibanasiaklods11,arlottilods14} we have  $J_t(x)\equiv 1$ and $q\equiv 0$. The operators appearing in Theorem~\ref{t:gst} are denoted there, respectively, by
$\Xi_\lambda=\Psi(\lambda)$, $M_\lambda=\Tr^+\Psi(\lambda)$, $C_\lambda=R(\lambda,A_0)$, and $G_\lambda=\Tr^{+}R(\lambda,A_0)$.
\end{remar}

We conclude this section with the following.

\begin{theore}\label{th:exsto}
Suppose that Assumptions~\ref{a:nons}--\ref{a:varp} hold.  Let $(A,\mathcal{D})$ be defined by~\eqref{d:operatorA}--\eqref{d:domainopA},  $\Psi_0(f)=\Tr^{-}f$ for $f\in \mathcal{D}$ and let
$B\colon \mathcal{D}\to L^1(E,m)$ and $\Psi\colon \mathcal{D}\to L^1(\Gamma^{-} ,m^{-})$ be positive operators satisfying
\begin{equation}\label{e:jump2}
\int_{E} B f dm+ \int_{\Gamma^{-} } \Psi f dm^{-} \le \int_{E} qf dm +\int_{\Gamma^{+} } \Tr^{+} fdm^{+}
\end{equation}
for all nonnegative $ f\in \mathcal{D}.$ Then there exists a substochastic semigroup $\{P(t)\}_{t\ge 0}$ with
generator $(G,\mathcal{D}(G))$ satisfying \eqref{d:resg} and
\[
Gf=A_{\Psi}f+Bf,\quad A_{\Psi}f=Af,\quad  f\in \mathcal{D}(A_{\Psi})=\{f\in \mathcal{D}:\Tr^{-}f=\Psi f\}.
\]
\end{theore}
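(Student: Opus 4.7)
The plan is to obtain Theorem~\ref{th:exsto} as a direct corollary of Theorem~\ref{th:pertbounB}. The verification amounts to checking the three hypotheses of that theorem in the current concrete setting, where $L^1 = L^1(E,m)$ and $L^1_{\partial} = L^1(\Gamma^{-},m^{-})$.

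First, I would invoke Theorem~\ref{t:gst}: with $\Psi_0 f = \Tr^{-} f$ and $(A,\mathcal{D})$ as in~\eqref{d:operatorA}--\eqref{d:domainopA}, Assumptions~\ref{a:nons} and~\ref{a:varp} guarantee that Assumption~\ref{a:S2} is satisfied (the right inverse $\Psi(\lambda)$ is given explicitly by~\eqref{e:psilam}), and that $(A_0,\mathcal{D}(A_0)) = (A,\Ker(\Psi_0))$ generates the substochastic semigroup $\{S(t)\}_{t\ge 0}$ on $L^1(E,m)$. That settles the structural prerequisites of Theorem~\ref{th:pertbounB}.

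The main computational step is to translate~\eqref{e:jump2} into the dissipativity condition~\eqref{eq:zero} via Green's identity. Given any nonnegative $f \in \mathcal{D}$, the definition of $\mathcal{D}$ in~\eqref{d:domainopA} guarantees $\Tr^{-} f \in L^1(\Gamma^{-},m^{-})$, so that Theorem~\ref{l:Green0} applies (also Assumption~\ref{a:dive} is among the hypotheses):
\[
\int_E T_{\max} f \, dm = \int_{\Gamma^{-}} \Tr^{-} f \, dm^{-} - \int_{\Gamma^{+}} \Tr^{+} f \, dm^{+}.
\]
Combining this with $Af = T_{\max} f - q f$ and $\Psi_0 f = \Tr^{-} f$ gives
\[
\int_E (Af + Bf)\, dm + \int_{\Gamma^{-}}(\Psi f - \Psi_0 f)\, dm^{-} = \int_E Bf\, dm + \int_{\Gamma^{-}} \Psi f\, dm^{-} - \int_E qf\, dm - \int_{\Gamma^{+}} \Tr^{+} f\, dm^{+},
\]
which is $\le 0$ by~\eqref{e:jump2}. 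Hence~\eqref{eq:zero} holds.

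With all hypotheses of Theorem~\ref{th:pertbounB} verified, it delivers an extension $G$ of $A_{\Psi} + B$ that generates a substochastic semigroup $\{P(t)\}_{t\ge 0}$ on $L^1(E,m)$, with resolvent given by~\eqref{d:resg}. The identifications $\mathcal{D}(A_{\Psi}) = \{f \in \mathcal{D} : \Tr^{-} f = \Psi f\}$ and $A_{\Psi} f = Af$ are immediate from~\eqref{d:dombou} once $\Psi_0 f = \Tr^{-} f$ is substituted. I do not anticipate a real obstacle here: the proof is essentially bookkeeping, and the only point requiring care is confirming that the domain condition in $\mathcal{D}$ is rich enough to invoke Theorem~\ref{l:Green0}, which is built into~\eqref{d:domainopA} by construction.
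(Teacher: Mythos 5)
Your proposal is correct and follows essentially the same route as the paper: translate \eqref{e:jump2} into \eqref{eq:zero} via Green's identity \eqref{e:Afin0}, use Theorem~\ref{t:gst} to secure Assumption~\ref{a:S2} and the generation property of $(A,\Ker(\Psi_0))$, and then apply Theorem~\ref{th:pertbounB}. The paper's own proof is just a more terse version of exactly this argument.
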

\begin{proof}
Condition \eqref{e:jump2} combined with \eqref{e:Afin0}  implies that \eqref{eq:zero} holds with $\Psi_0=\Tr^{-}$. It follows from
Theorem~\ref{t:gst} that Theorem~\ref{th:pertbounB} applies giving the existence of a substochastic semigroup with generator extending the operator $A_{\Psi}+B$.
\end{proof}

\subsection{Applications to PDMPs}

Let $(\phi,q,\mathcal{P})$ be the characteristics of the minimal Markov process $\{X(t)\}_{t\ge 0}$.
We say that the minimal process $X=\{X(t)\}_{t\ge 0}$ \emph{induces a substochastic semigroup} $\{P(t)\}_{t\ge 0}$ on $L^1(E,m)$ if
\begin{equation}\label{d:indss}
\int_{B }P(t)f(x)m(dx)=\int_{E}\Pr(X(t)\in B ,t<\tau_{\infty}|X_0=x)f(x)m(dx)
\end{equation}
for all $f\in L^1(E,m)$, $B \in \mathcal{B}(E)$, $t>0$.
Suppose that Assumptions~\ref{a:nons}--\ref{a:dive} hold.
Recall that $L^1(\Gamma^{-},m^{-})$ and $L^1(\Gamma^{+},m^{+})$ are the trace spaces corresponding to the boundaries $\Gamma^{-}$ and $\Gamma^{+}$.
The jump distribution $\mathcal{P}$  is assumed to be non-singular in the following sense:
there exists a positive operator
$(P_0,P_{\partial})\colon L^1(E,m)\times L^1(\Gamma^{+},m^{+})\to L^1(E,m)\times L^1(\Gamma^{-}, m^{-})$
such that
\begin{multline}\label{e:jump}
\int_{E} \jump (x,B )f(x)m(dx)+\int_{\Gamma^{+} } \jump (x,B )f_{\partial^+}(x)m^{+}(dx) \\=
\int_{B \cap E^0} P_{0}(f,f_{\partial^+})(x)m(dx)+ \int_{B \cap \Gamma^{-} } P_{\partial}(f,f_{\partial^+})(x)m^{-}(dx),
\end{multline}
for all Borel subsets $B$ of $E$ and all nonnegative $f\in L^1(E,m)$,  $f_{\partial^+}\in L^1(\Gamma^{+} ,m^{+})$.  
Our next result is  an extension of Theorem 2.8 in \cite{GMTK}. It follows from Theorem \ref{th:exsto},  Corollary~\ref{c:petbounB} and  the first part of Theorem 2.8 in~\cite{GMTK}.
\begin{theore}\label{th:ptsmal} Suppose that Assumptions~\ref{a:nons} to \ref{a:varp} hold.  Let $(A,\mathcal{D})$ be defined by~\eqref{d:operatorA}--\eqref{d:domainopA},  $\Psi_0(f)=\Tr^{-}f$ for $f\in \mathcal{D}$  and let
$B\colon \mathcal{D}\to L^1(E,m)$, $\Psi\colon \mathcal{D}\to L^1(\Gamma^{-} ,m^{-})$ be given  by
\begin{gather}\label{d:part}
Bf=P_{0}(q f, \Tr^+  f),\quad \Psi f=P_{\partial} (q f, \Tr^{+}f), \quad f\in \mathcal{D},
\end{gather}
where
$P_0$, $P_{\partial}$ satisfy \eqref{e:jump}.
Then the minimal process $X$ with characteristics $(\phi,q,\jump)$ induces a substochastic semigroup $\{P(t)\}_{t\ge 0}$ with
generator $(G,\mathcal{D}(G))$ satisfying \eqref{d:resg} and being an extension of the operator $A_{\Psi}+B$.

Moreover,
$\{P(t)\}_{t\ge 0}$ is stochastic if and only if $G=\overline{A_{\Psi}+B}$.
\end{theore}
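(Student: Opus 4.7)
The plan is to view the statement as a straightforward synthesis of three already-proved results: the substochastic existence will come from Theorem~\ref{th:exsto}, the stochastic/closure dichotomy from Corollary~\ref{c:petbounB}, and the identification of the abstractly constructed semigroup with the one induced by the minimal process (in the sense of~\eqref{d:indss}) from the first part of~\cite[Theorem~2.8]{GMTK}. The only genuine work is to check that the specific operators $B$ and $\Psi$ in~\eqref{d:part} satisfy the hypotheses of the first two results with equality.

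First I would verify positivity: $P_{0}$ and $P_{\partial}$ are positive by assumption, and $f\mapsto qf$ together with $\Tr^{+}$ are positive on $\mathcal{D}$, so the compositions in~\eqref{d:part} are positive. The key step is the conservation identity. Taking $B=E$ in~\eqref{e:jump} and using $\jump(x,E)=1$ for every $x\in E\cup\Gamma^{+}$ collapses the left-hand side of~\eqref{e:jump} to $\int_{E}f\,dm+\int_{\Gamma^{+}}f_{\partial^{+}}\,dm^{+}$, which yields
\begin{equation*}
\int_{E}f\,dm+\int_{\Gamma^{+}}f_{\partial^{+}}\,dm^{+}=\int_{E}P_{0}(f,f_{\partial^{+}})\,dm+\int_{\Gamma^{-}}P_{\partial}(f,f_{\partial^{+}})\,dm^{-}.
\end{equation*}
Specializing to $(f,f_{\partial^{+}})=(qf,\Tr^{+}f)$ turns the right-hand side into $\int_{E}Bf\,dm+\int_{\Gamma^{-}}\Psi f\,dm^{-}$, so equality holds in~\eqref{e:jump2}. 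Combining this identity with Green's formula~\eqref{e:Afin0} applied to $A=T_{\max}-q$ then produces~\eqref{eq:zeroo}.

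With this identity in hand, Theorem~\ref{th:exsto} directly delivers a substochastic semigroup $\{P(t)\}_{t\ge 0}$ whose generator $G$ extends $A_{\Psi}+B$ and whose resolvent is given by~\eqref{d:resg}. Because equality (not just inequality) holds in~\eqref{e:jump2}, Corollary~\ref{c:petbounB} applies and yields the equivalence
\begin{equation*}
\{P(t)\}_{t\ge 0}\text{ is stochastic}\ \Longleftrightarrow\ G=\overline{A_{\Psi}+B}.
\end{equation*}

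The remaining step, which is \emph{not} a consequence of the abstract machinery of Section~\ref{s:2}, is to match this semigroup with the transition semigroup induced by the minimal process through~\eqref{d:indss}. This is precisely what the first part of~\cite[Theorem~2.8]{GMTK} establishes: the kernel on the right-hand side of~\eqref{d:indss} defines a $C_{0}$-semigroup on $L^{1}(E,m)$ whose resolvent is the series~\eqref{d:resg}, obtained by decomposing according to the number of jumps before time $t$ and integrating along deterministic trajectories between jumps. This probabilistic identification is the conceptually heaviest ingredient; however, since it is already available from~\cite{GMTK}, the present proof reduces to assembling the three pieces and carrying out the elementary accounting above. The main obstacle I anticipate, had the identification not been available, would have been precisely this matching of the Dynkin-type probabilistic series with the analytic resolvent~\eqref{d:resg}; within the present framework it is imported as a black box.
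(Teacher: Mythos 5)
Your proposal is correct and follows essentially the same route as the paper, which proves this theorem by exactly the citation chain you describe: Theorem~\ref{th:exsto} for existence, Corollary~\ref{c:petbounB} for the stochastic/closure equivalence, and the first part of \cite[Theorem~2.8]{GMTK} for the probabilistic identification via \eqref{d:indss}. Your explicit verification that taking $B=E$ in \eqref{e:jump} together with $\jump(x,E)=1$ forces equality in \eqref{e:jump2}, and hence \eqref{eq:zeroo} via Green's identity \eqref{e:Afin0}, is the accounting the paper leaves implicit.
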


\begin{remar}  We do not have to define $\jump(x,\cdot)$ at every point $x\in E\cup \Gamma^{+}$ as a probability measure on $E$, i.e. $\jump(x,E)=1$.
Instead, we only need that $q$ and $\jump$ satisfy the following 
 \begin{equation*}
 \jump(x,E)q(x)=q(x)  \quad \text{for }x\in E\quad \text{and}\quad \jump(x,E)=1 \quad \text{for }x\in \Gamma^{+}.
 \end{equation*}
\end{remar}

We now provide a useful criterion for the generator to be the closure by using Theorem~\ref{t:opK}.
The substochastic operator $K\colon L^1(E,m)\times L^1(\Gamma^{-},m^{-})\to   L^1(E,m)\times L^1(\Gamma^{-},m^{-})$ as defined in \eqref{eq:K} is of the form~\cite{GMTK}
\begin{equation}\label{d:operatorK}
K (f,f_{\partial})=\big(P_0(q R_0(f,f_\partial),R_0(f,f_\partial)),P_{\partial}(q R_0(f,f_\partial),R_0(f,f_\partial))\big),
\end{equation}
where
$P_0,P_\partial$ satisfy \eqref{e:jump} and
\begin{multline}\label{d:R0p}
R_0(f,f_{\partial})(x)=\int_{0}^{t_{-}(x)}e^{-\int_0^t q(\phi_{-r }(x))dr} f(\phi_{-t}(x))J_{-t}(x)dt \\ +\mathbf{1}_{\{t_{-}(x)<\infty\}}e^{-\int_0^{t_{-}(x)} q(\phi_{-r }(x))dr} f_\partial(\phi_{-t_{-}(x)}(x))J_{-t_{-}(x)}(x),\quad x\in E\cup \Gamma^{+},
\end{multline}
for nonnegative $(f,f_{\partial})\in L^1(E,m)\times L^1(\Gamma^{-} ,m^{-})$.

\begin{corollar}\label{c:closure} Let $R_0$ be defined in \eqref{d:R0p}. In the setting of Theorem~\ref{th:ptsmal}  suppose that one of the following holds:
\begin{enumerate}[\upshape(i)]
\item There exist quasi-interior elements $f\in L^1(E,m)$ and $f_{\partial}\in L^1(\Gamma^{-},m^{-})$ such that
\begin{equation}\label{e:qiP01}
B(R_0(f,f_\partial))\le f \quad \text{and}\quad  \Psi(R_0(f,f_\partial))\le f_{\partial}.
\end{equation}
\item $B\equiv 0$ and there exists a quasi-interior element $f_{\partial}\in L^1(\Gamma^{-},m^{-})$ such that \linebreak[4]$\Psi(R_0(0,f_\partial))\le f_{\partial}.$
\end{enumerate}
Then  $G=\overline{A_{\Psi}+B}$.
\end{corollar}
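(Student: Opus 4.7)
The plan is to apply Theorem~\ref{t:opK} combined with the sub-invariance criterion for mean ergodicity on $L^1$-spaces recorded in Remark~\ref{r:meanerg}, and then invoke Theorem~\ref{th:equiv} (as already used in the proofs of Theorems~\ref{th:pertbounB} and~\ref{th:ptsmal}) to pass from condition~\eqref{e:merg} to $G=\overline{A_\Psi+B}$. The starting point is the identification, implicit in~\eqref{d:operatorK} together with~\eqref{d:part}, that the operator $K\colon\BS\to\BS$ on $\BS=L^1(E,m)\times L^1(\Gamma^-,m^-)$ admits the compact representation
\[
K(f,f_\partial)=\bigl(B\,R_0(f,f_\partial)\,,\,\Psi\,R_0(f,f_\partial)\bigr),
\]
where $B u=P_0(q u,\Tr^+u)$ and $\Psi u=P_\partial(q u,\Tr^+u)$ are applied to $u=R_0(f,f_\partial)$. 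Indeed, by~\eqref{d:R0p} the restriction of $R_0(f,f_\partial)$ to $E$ supplies the multiplier $q$ used in $P_0,P_\partial$, while its restriction to $\Gamma^+$ plays the role of $\Tr^+$ in~\eqref{e:jump}. Under this identification, the two hypotheses of the corollary become sub-invariance assertions for $K$.

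For case (i), the inequalities state precisely that $K(f,f_\partial)\le (f,f_\partial)$ componentwise, with $(f,f_\partial)$ a quasi-interior element of the $L^1$-space $\BS$ (viewed as $L^1$ over the disjoint union of $E$ and $\Gamma^-$ with the measures $m$ and $m^-$). By Remark~\ref{r:meanerg}, the substochastic operator $K$ is mean ergodic on $\BS$, hence \emph{a fortiori} on $\BS_0=\overline{\mathcal D}\times\{0\}$. Theorem~\ref{t:opK} then delivers~\eqref{e:merg}, and Theorem~\ref{th:equiv} (transported to the PDMP setting as in the last assertion of Theorem~\ref{th:ptsmal}) yields $G=\overline{A_\Psi+B}$.

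Case (ii) requires an additional bookkeeping step, since the vector $(0,f_\partial)$ produced by the hypothesis is not quasi-interior in $\BS$. When $B\equiv 0$, the range of $K$ lies inside $\{0\}\times L^1(\Gamma^-,m^-)$, and one checks by induction that
\[
K^n(g,0)=\bigl(0,\,\tilde K^{\,n-1}(\Psi\,R_0(g,0))\bigr),\qquad n\ge 1,\;g\in L^1(E,m),
\]
where $\tilde K h_\partial:=\Psi\,R_0(0,h_\partial)$ is a substochastic operator on the single space $L^1(\Gamma^-,m^-)$. The hypothesis now reads $\tilde K f_\partial\le f_\partial$ with $f_\partial$ quasi-interior, so Remark~\ref{r:meanerg} gives mean ergodicity of $\tilde K$ on $L^1(\Gamma^-,m^-)$. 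The displayed formula transfers this mean ergodicity to $K$ on all of $\BS_0$, and the conclusion again follows from Theorems~\ref{t:opK} and~\ref{th:equiv}.

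The main (essentially bookkeeping) obstacle is the careful identification of $K$ in the compact form $K(f,f_\partial)=(B\,R_0(f,f_\partial),\,\Psi\,R_0(f,f_\partial))$, which relies on the explicit trace formulas of Theorem~\ref{t:gst} to match the restriction of $R_0(f,f_\partial)$ to $\Gamma^+$ with the argument of $P_0,P_\partial$ in~\eqref{e:jump}. Once this identification is in place, both parts of the corollary reduce to verifying mean ergodicity of a single substochastic operator via a quasi-interior sub-fixed point, and the rest is an application of the machinery of Section~\ref{s:2}.
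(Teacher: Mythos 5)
Your proposal is correct and follows essentially the same route as the paper: identify $K$ from \eqref{d:operatorK} as $(f,f_\partial)\mapsto(B\,R_0(f,f_\partial),\Psi\,R_0(f,f_\partial))$, deduce mean ergodicity from the quasi-interior sub-invariant element via Remark~\ref{r:meanerg} (in case (ii) by reducing to the boundary operator $K_\partial f_\partial=\Psi(R_0(0,f_\partial))$ exactly as you do with $\tilde K$), and conclude with Theorems~\ref{t:opK} and~\ref{th:equiv}. Your write-up merely makes explicit the bookkeeping that the paper leaves implicit.
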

\begin{proof}
If the first condition holds then the operator $K$ defined in \eqref{d:operatorK} is mean ergodic on $L^1(E,m)\times L^1(\Gamma^{-},m^{-})$. Therefore, Theorems~\ref{t:opK} and \ref{th:equiv} imply the closure property.
Now if the second condition holds, then $K(L^1(E,m)\times \{0\})\subseteq \{0\}\times L^1(\Gamma^{-},m^{-})$ and the substochastic operator $K_{\partial}\colon L^1(\Gamma^{-},m^{-})\to L^1(\Gamma^{-},m^{-})$ defined by
\[
K_{\partial}f_{\partial}=\Psi\big( R_0(0,f_\partial)\big),  \quad f_{\partial}\in L^1(\Gamma^{-},m^{-}),
\] is mean ergodic on $L^1(\Gamma^{-},m^{-})$ implying that $K$ is mean ergodic on $L^1(E,m)\times \{0\}$.
\end{proof}

\subsection{Collisionless kinetic equations}

In this section we suppose that Assumptions~\ref{a:nons}--\ref{a:varp} hold and $B\equiv 0$.
Our Theorem~\ref{th:pertboun} extends the generation results for streaming operators with abstract boundary conditions obtained in \cite{arlottilods05,mustapha08} for the free transport equation and in \cite{arlottibanasiaklods11,arlottilods14} for the divergence free vector fields.
Assume now that the boundary operator $\Psi$ is of the form $\Psi(f)=H( \Tr^{+} f)$
where $H\colon L^1(\Gamma^{+},m^{+})\to L^1(\Gamma^{-}, m^{-})$ is a stochastic operator then
our Theorem \ref{th:pertboun} implies \cite[Theorem 2.5]{arlottilods14} and \cite[Theorem 6.2]{arlottibanasiaklods11}.
Moreover, Theorem~\ref{th:pertboun} \eqref{i:ec1} gives the following.
\begin{corollar}\label{c:pert1}
If $\Psi=H\Tr^+$ and one of the following holds
\begin{enumerate}[\upshape(i)]
\item $\mathrm{ess}\inf\{t_{+}(z):z\in \Gamma^{-}\}>0$,
\item $(I-\Tr^{+}\Psi(\lambda) H)(L^1(\Gamma^{+},m^{+}))=L^1(\Gamma^{+},m^{+})$ for some $\lambda>0$,
\end{enumerate}
then $A_{\Psi}$ is the generator.
\end{corollar}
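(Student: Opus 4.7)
The plan is, in both cases, to verify the range condition supplied by Theorem~\ref{th:pertboun}\eqref{i:ec1}, namely $\Psi(\Ker(\Psi_0)) \subseteq (I_\partial - \Psi\Psi(\lambda))(L^1(\Gamma^-,m^-))$ for some $\lambda>0$. Writing $M_\lambda := \Tr^+\Psi(\lambda)$, the factorization $\Psi = H\Tr^+$ gives $\Psi\Psi(\lambda)=HM_\lambda$, whereas the hypothesis appearing in (ii) concerns the conjugate product $I - M_\lambda H$ on $L^1(\Gamma^+,m^+)$. The corollary will follow once this range inclusion is established.

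For case (ii), I would argue by an operator-conjugation trick. Any $f_\partial \in \Psi(\Ker(\Psi_0))$ factors as $f_\partial = Hg_+$, where $g_+ \in \Tr^+(\Ker(\Psi_0)) \subseteq L^1(\Gamma^+,m^+)$ since $\Ker(\Psi_0)=R(\lambda,A_0)(L^1)$ and $\Tr^+ R(\lambda,A_0)$ maps into $L^1(\Gamma^+,m^+)$ by Theorem~\ref{t:gst}. By surjectivity of $I - M_\lambda H$ on $L^1(\Gamma^+,m^+)$ pick $h_+$ with $(I - M_\lambda H)h_+ = g_+$ and set $h_- := Hh_+$; then $(I - HM_\lambda)h_- = H(I - M_\lambda H)h_+ = Hg_+ = f_\partial$, establishing the desired inclusion.

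For case (i), the plan is to estimate the operator norm of $M_\lambda$ and then exploit $\|H\|=1$. From \eqref{e:gppsilam}, for nonnegative $f_\partial$ the function $M_\lambda f_\partial$ is supported on those $z\in \Gamma^+$ with $w:=\phi_{-t_-(z)}(z)\in \Gamma^-$, and for any such $z$ one has $t_-(z)=t_+(w)\geq t_0 := \mathrm{ess}\inf\{t_+(w):w\in \Gamma^-\}>0$. Hence $M_\lambda f_\partial \leq e^{-\lambda t_0}M_0 f_\partial$ pointwise, where $M_0$ denotes the operator defined by the same formula \eqref{e:gppsilam} with the $e^{-\lambda t_-(z)}$ factor removed. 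A Green's identity plus monotone-convergence argument then bounds $\|M_0 f_\partial\|_{L^1(\Gamma^+,m^+)} \leq \|f_\partial\|_{L^1(\Gamma^-,m^-)}$: applying Theorem~\ref{l:Green0} to $\Psi(\mu)f_\partial\in \Ker(\mu-A)$ and using $T_{\max}\Psi(\mu)f_\partial=(\mu+q)\Psi(\mu)f_\partial\geq 0$ yields $\|M_\mu f_\partial\|\leq \|f_\partial\|$, after which $\mu\downarrow 0$ and monotone convergence pass to $M_0$. Consequently $\|HM_\lambda\|\leq e^{-\lambda t_0}<1$, so $I-\Psi\Psi(\lambda)=I-HM_\lambda$ is Neumann-invertible on $L^1(\Gamma^-,m^-)$; its range is the whole space and certainly contains $\Psi(\Ker(\Psi_0))$.

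The main obstacle is the norm estimate on $M_\lambda$ in case (i): one must cleanly separate the $\lambda$-dependent exponential factor from the remaining Jacobian-and-$q$ content, which is most transparent via Green's identity applied to $\Psi(\mu)f_\partial$ rather than a direct flow-induced change of variables between $\Gamma^-$ and $\Gamma^+$. With that bound in hand, both cases reduce to a single application of Theorem~\ref{th:pertboun}\eqref{i:ec1}, yielding $G_\Psi = A_\Psi$, which is precisely the assertion that $A_\Psi$ generates the substochastic semigroup.
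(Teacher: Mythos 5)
Your proposal is correct and follows essentially the same route as the paper: both cases are reduced to the range condition of Theorem~\ref{th:pertboun}~\eqref{i:ec1}, case (ii) via the conjugation identity $H(I-\Tr^{+}\Psi(\lambda)H)h_{+}=(I_{\partial}-\Psi\Psi(\lambda))Hh_{+}$, and case (i) by showing $\|\Tr^{+}\Psi(\lambda)\|\le e^{-\lambda c}<1$ with $c=\mathrm{ess}\inf\{t_{+}(z):z\in\Gamma^{-}\}$, so that $I_{\partial}-\Psi\Psi(\lambda)$ is surjective by a Neumann series. The only deviation is in how the norm bound is obtained: the paper reads it off from the explicit identity $\int_{\Gamma^{+}}\Tr^{+}\Psi(\lambda)f_{\partial}\,dm^{+}=\int_{\Gamma^{-}}e^{-\lambda t_{+}(z)-\int_0^{t_{+}(z)}q(\phi_{r}(z))dr}f_{\partial}(z)\,m^{-}(dz)$, a direct consequence of Assumption~\ref{a:dive}, whereas you derive the same bound from Green's identity (Theorem~\ref{l:Green0}) applied to $\Psi(\mu)f_{\partial}$ together with a monotone limit $\mu\downarrow 0$; both arguments are valid and yield the same estimate.
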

\begin{proof}
For any nonnegative $f_\partial \in L^1(\Gamma^{-} ,m^{-})$, it holds
\begin{equation*}
\int_{\Gamma^{+} }\Tr^{+}\Psi(\lambda)f_\partial(z)m^{+}(dz)=\int_{\Gamma^{-} }e^{-\lambda t_{+}(z)-\int_0^{t_{+}(z)}q(\phi_{r}(z))dr}f_{\partial}(z) m^{-}(dz).
\end{equation*}
Thus the operator $\Tr^{+}\Psi(\lambda)$ has norm less than $e^{-\lambda c}$ for some positive constant $c$, implying that
$\|H\Tr^{+}\Psi(\lambda)\|<1$. The second condition implies that $\Psi (\Ker(\Tr^{-}))\subseteq (I_{\partial}-\Psi\Psi(\lambda))(L^1(\Gamma^{-},m^{-})$.
\end{proof}

It follows from Theorem~\ref{th:pertboun} part~\eqref{i:ec2} that the operator
$\overline{A_{\Psi}}$  with $\Psi=H\Tr^+$ is the generator if and only if
\[
\lim_{n\to \infty}\|(H\Tr^{+}\Psi(\lambda))^n H\Tr^{+}R(\lambda,A_0)f\|=0
\]
for all $f\in L^1(E,m)$ and some $\lambda>0$. Since the operators $H$ and $\Tr^{+}\Psi(\lambda)$ are bounded, this is equivalent to
\[
\lim_{n\to \infty}\|(\Tr^{+}\Psi(\lambda) H)^n\Tr^{+}R(\lambda,A_0)f\|=0
\]
for all $f\in L^1(E,m)$ and some $\lambda>0$, recovering the corresponding results \cite[Lemma 6]{mustapha08} and \cite[Proposition 6.2]{arlottibanasiaklods11}. Note that if $q\equiv 0$ and $J_t\equiv 1$ then $\Tr^{+}$  is surjective by  \cite[Proposition 2.3]{arlottibanasiaklods11}.

Finally, our next result extends \cite[Proposition 3.5]{arlottilods05},  \cite[Theorem 21]{mustapha08} and \cite[Proposition 8]{arlottilods14}.

\begin{corollar}\label{c:perturb2}
If $\Psi=H\Tr^+$ and there exists a quasi-interior element $f_{\partial+}\in L^1(\Gamma^{+},m^{+})$ such that
\begin{equation}\label{e:exqi}
H(f_{\partial+})(\phi_{-t_{-}(x)}(x))J_{-t_{-}(x)}(x)e^{-\int_0^{t_{-}(x)} q(\phi_{-r }(x))dr} \mathbf{1}_{\{t_{-}(x)<\infty\}}\le f_{\partial+}(x) ,\quad x\in \Gamma^{+},
\end{equation}
then $\overline{A_{\Psi}}$ is the generator.
\end{corollar}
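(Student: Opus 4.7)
The plan is to apply Theorem~\ref{t:opK}: once $K$ is shown to be mean ergodic on $\BS_0=L^1(E,m)\times\{0\}$, condition~\eqref{e:merg} holds, so Theorem~\ref{th:equiv} gives $\G=\overline{(\A+\B)_|}$, which (since $B\equiv 0$) is equivalent to $G_\Psi=\overline{A_\Psi}$ by the construction of Theorem~\ref{th:pertboun}.

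First I would reduce the hypothesis \eqref{e:exqi} to a substochastic majorization on $L^1(\Gamma^+,m^+)$. Define $T:=\Tr^+\Psi(0)H$ on $L^1(\Gamma^+,m^+)$ and $L:=H\Tr^+\Psi(0)$ on $L^1(\Gamma^-,m^-)$, where $\Tr^+\Psi(0)$ is the monotone limit of $\Tr^+\Psi(\lambda)$ as $\lambda\downarrow 0$. Formula~\eqref{e:gppsilam} at $\lambda=0$ shows that \eqref{e:exqi} says exactly $Tf_{\partial+}\le f_{\partial+}$. The operator $\Tr^+\Psi(0)$ is substochastic: applying Green's identity~\eqref{e:Afin0} to $\Psi(\lambda)f_\partial$ gives, for nonnegative $f_\partial$, $\|\Tr^+\Psi(\lambda)f_\partial\|=\|f_\partial\|-\int_E(\lambda+q)\Psi(\lambda)f_\partial\,dm\le\|f_\partial\|$, and monotone convergence extends this to $\lambda=0$. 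Since $H$ is stochastic, both $L$ and $T$ are substochastic, and a direct induction from the identity $LH=HT$ yields the intertwining $L^nH=HT^n$ for every $n\ge 0$.

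Because $f_{\partial+}$ is a quasi-interior element of $L^1(\Gamma^+,m^+)$ satisfying $Tf_{\partial+}\le f_{\partial+}$, Remark~\ref{r:meanerg} gives mean ergodicity of $T$ on $L^1(\Gamma^+,m^+)$. Using~\eqref{eq:RA} one computes $K(f,0)=(0,H\Tr^+\mathcal{R}_0 f)$, with $\mathcal{R}_0 f:=\lim_{\lambda\downarrow 0}R(\lambda,A_0)f\in L^1(E,m)_+$ well defined for $f\ge 0$ (and extended by linearity), and inductively
\[
K^n(f,0)=\bigl(0,\,L^{n-1}H\Tr^+\mathcal{R}_0 f\bigr)=\bigl(0,\,HT^{n-1}\Tr^+\mathcal{R}_0 f\bigr),\qquad n\ge 1.
\]
The first coordinate of the Cesàro average $\tfrac{1}{N}\sum_{n=0}^{N-1}K^n(f,0)$ equals $f/N\to 0$, while the second is $H$ applied to $\tfrac{1}{N}\sum_{n=0}^{N-2}T^n(\Tr^+\mathcal{R}_0 f)$, which converges in $L^1(\Gamma^+,m^+)$ by mean ergodicity of $T$ and the boundedness of $H$. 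Thus $K$ is mean ergodic on $\BS_0$, and Theorem~\ref{t:opK} combined with Theorem~\ref{th:equiv}\eqref{i:1t}$\Leftrightarrow$\eqref{i:2t} delivers $G_\Psi=\overline{A_\Psi}$.

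The main obstacle is purely interpretive: recognising, via~\eqref{e:gppsilam} at $\lambda=0$, that the pointwise estimate~\eqref{e:exqi} is precisely the majorization $Tf_{\partial+}\le f_{\partial+}$ on the \emph{outgoing} trace space (the dual side to Corollary~\ref{c:closure}\eqref{i:ec2}), and then moving the ergodic information back to the orbits of $K$, which live on $L^1(\Gamma^-,m^-)$, by means of the intertwining $L^nH=HT^n$. Once this transfer is carried out, the Cesàro machinery of Theorems~\ref{t:opK} and~\ref{th:equiv} finishes the argument mechanically.
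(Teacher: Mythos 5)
Your proof is correct, and its engine is the same one the paper uses: you read \eqref{e:exqi} as sub-invariance of the quasi-interior element $f_{\partial+}$ under the outgoing-boundary operator $\Tr^{+}\Psi(0)H$, invoke Remark~\ref{r:meanerg} to get mean ergodicity on $L^1(\Gamma^{+},m^{+})$, and transport it to the incoming side through the intertwining $(H\Tr^{+}\Psi(\cdot))^nH=H(\Tr^{+}\Psi(\cdot)H)^n$. Where you diverge is the endpoint of the argument. The paper stays at a fixed $\lambda>0$: since $e^{-\lambda t_{-}}\le 1$, condition \eqref{e:exqi} gives $\Tr^{+}\Psi(\lambda)Hf_{\partial+}\le f_{\partial+}$ directly, so $\Tr^{+}\Psi(\lambda)H$ is mean ergodic, hence $\Psi\Psi(\lambda)=H\Tr^{+}\Psi(\lambda)$ is mean ergodic on $H(L^1(\Gamma^{+},m^{+}))$, the trivial kernel forces $(\Psi\Psi(\lambda))^nf_{\partial}\to 0$ there, and the inclusion $\Psi(\Ker(\Tr^{-}))\subseteq H(L^1(\Gamma^{+},m^{+}))$ lets one conclude from Theorem~\ref{th:pertboun}~\eqref{i:ec2}. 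You instead pass to $\lambda=0$, assemble the operator $K$ of Theorem~\ref{t:opK}, verify its mean ergodicity on $\BS_0$ coordinatewise, and finish with Theorems~\ref{t:opK} and~\ref{th:equiv}; this is in effect the second alternative of Corollary~\ref{c:closure} specialized to $\Psi=H\Tr^{+}$. The paper's version is leaner because it never needs the monotone limits $\mathcal{R}_0=\lim_{\lambda\downarrow 0}R(\lambda,A_0)$ and $\Tr^{+}\Psi(0)$ or their substochasticity --- facts your route must (and does) justify via Green's identity --- while your version makes the connection to the PDMP criterion \eqref{e:qiP01} explicit. Both arguments are complete.
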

\begin{proof}
The assumption \eqref{e:exqi}  implies that for any $\lambda>0$ the operator
$
\Tr^+\Psi(\lambda)H
$ is mean ergodic on $L^1(\Gamma^{+},m^{+})$. Thus for any $g\in L^1(\Gamma^{+},m^{+})$ the sequence
\[
\frac{1}{N}\sum_{n=0}^{N-1} (\Tr^+\Psi(\lambda)H )^n g
\]
converges in $L^1(\Gamma^{+},m^{+})$. Since the operator $H$ is continuous, we conclude that the operator $\Psi\Psi(\lambda)=H\Tr^+\Psi(\lambda)$ is mean ergodic on $H(L^1(\Gamma^{+},m^{+}))$. Consequently, $(\Psi\Psi(\lambda))^n f_{\partial}\to 0$ for all $f_{\partial}\in H(L^1(\Gamma^{+},m^{+}))$. Since $\Psi(\Ker(\Tr^{-})\subseteq H(L^1(\Gamma^{+},m^{+}))$, the result follows from Theorem~\ref{th:pertboun}~\eqref{i:ec2}.
\end{proof}

\section{Examples}

In this section we illustrate our abstract results with particular examples of processes for which one can check that the induced semigroup is stochastic.

\begin{exampl}[Gene expression with bursting and memory]
Gene expression is a process by which the information from a gene is used to synthesize
proteins.
Proteins are  basic components of
living organisms. They are polymers made of amino acids. In nature
there are 20 different amino acids.
The amino acid sequence in proteins is constant in a given species and genetically encoded.
Patterns are stored in DNA.
The gene is expressed if it is prescribed by RNA polymerase (transcribed) from DNA to messenger RNA (mRNA), ribosomes bind to the transcribed mRNA and synthesize the protein in the translation process.
Only part of the genes in the cell is expressed at any given time.

Gene expression is inherently stochastic which is the effect of the low copy numbers of DNA
and can lead to large variability in molecule levels for genetically identical cells.
In experimental studies \cite{cai}, it was observed that the synthesis of proteins is at random time intervals and in random amounts, characterized by the occurrence of \emph{bursts}  (\emph{translational bursts}).  Similarly, it has been observed in
\cite{golding}, that mRNA can also be produced in the form of bursts (\emph{transcriptional bursts}).  We allow translations/transcripts
to effectively be made both in arbitrary independent
bursts  and at arbitrary independent time intervals as observed in \cite{pedraza08,kumar2015transcriptional}.

We model the amount $\x(t)$ of molecules (mRNA or protein)  in a cell at time $t$ as a  continuous variable. We assume that molecules undergo degradation with rate $\gamma>0$,  that a random amount $\eta_n$ of molecules is produced  through bursting at random time $\tau_n$, $n\ge 1$, and  that $\eta_n$ and $T_n=\tau_n-\tau_{n-1}$, where $\tau_0=0$, are independent random variables with densities $h$ and  $h_T$, respectively. If $h_T$  is exponential then we recover the models from \cite{friedman06,mcmmtkry13} with constant intensity. To study our model as a PDMP we introduce the variable $x=(\x,\mathrm{s})$, where $\mathrm{s}$ denotes the time that has elapsed since the last occurrence of bursts. We have $b(\x,\mathrm{s})=(-\gamma\x,1)$, $(\x,\mathrm{s})\in \mathbb{R}^2_+$ and the flow $\phi$ on $\mathbb{R}^2$ is given by
\[
\phi_t(\x,\mathrm{s})=(e^{-\gamma t}\x,\mathrm{s}+t).
\]
Assumption \ref{a:nons} holds with $m$ being the two-dimensional Lebesgue measure and
$J_t(\x,\mathrm{s})=e^{-\gamma t}$.  We have $E^0=(0,\infty)\times (0,\infty)$,   $\Gamma^{-}=(0,\infty)\times\{0\}$, $\Gamma^{+}=\emptyset$ and  $E=E^0\cup \Gamma^{-}$. The measure $m^{-}$ in Assumption \ref{a:dive} is $\delta_0\times \Leb$, where $\Leb$ is the one-dimensional Lebesgue
measure.

The only possible jumps are when  bursts occur. The amount of molecules is changed  from $\x(\tau_n)$ to $x(\tau_n)+\eta_n$ and we reset the clock. Thus, the jump distribution is 
\[
\jump ((\x,\mathrm{s}), B)=\int_0^\infty \mathbf{1}_B(\x+\mathrm{y},0)h(\mathrm{y})d\mathrm{y},\quad (\x,\mathrm{s})\in E^0.
\]
To calculate the rate of jumps  we observe that
if at time $t$ bursts have not occurred yet, the limiting probability that bursts will occur in the next $\Delta t$ is determined by
\[
\varrho(t)=\lim_{\Delta t \to 0}\frac{1}{\Delta t}\Pr(T_1\in (t,t+\Delta t]|T_1>t)=\frac{h_T(t)}{\int_{t}^\infty h_T(r)dr}.
\]
Hence, $q(\x,\mathrm{s})=\varrho(\mathrm{s})$, $(\x,\mathrm{s})\in \mathbb{R}^2_+$
and $q$ is continuous, implying that Assumption~\ref{a:varp} is satisfied.
We see that condition \eqref{e:jump} holds with $P_0\equiv 0$ and $P_{\partial}\colon L^1(E,m)\to L^1(\Gamma^{-},m^{-})$ being of the form
\[
P_{\partial}(f)(\x,0)=\int_0^\infty \int_0^{\x} f(\mathrm{y},\mathrm{s})h(\x-\mathrm{y})d\mathrm{y}d\mathrm{s},\quad f\in L^1(E,m).
\]
Thus $B\equiv 0$ and the boundary operator $\Psi$ is given by
\[
\Psi(f)(\x,0)=\int_0^\infty \int_0^{\x} \varrho(\mathrm{s})f(\mathrm{y},\mathrm{s})h(\x-\mathrm{y})d\mathrm{y}d\mathrm{s}, \quad \text{if }\varrho f\in L^1(E,m).
\]
It follows from \eqref{e:psilam} that
\[
\Psi(\lambda)f_{\partial}(\x,\mathrm{s})=e^{\gamma \mathrm{s}-\lambda \mathrm{s}-\int_0^{\mathrm{s}}\rho(s-r)dr}f_\partial(e^{\gamma \mathrm{s}}\mathrm{x},0), \quad   f_{\partial}\in L^{1}(\Gamma^{-} ,m^{-}),\lambda>0.
\]
Simple calculations show that
\[
\|\Psi\Psi(\lambda)\|=\int_{0}^\infty e^{-\lambda \mathrm{s}}h_T(\mathrm{s})d\mathrm{s}<1,\quad \lambda>0.
\]
Thus the induced substochastic semigroup is stochastic, by Theorem~\ref{th:ptsmal} and  Corollary~\ref{c:coro1}. Its generator is the operator $A_{\Psi}$, where for sufficiently smooth functions we have
\[
A_{\Psi}f(\x,\mathrm{s})=\frac{\partial}{\partial \x}(\gamma \x f(\x,\mathrm{s}))-\frac{\partial}{\partial \mathrm{s}}( f(\x,\mathrm{s}))-\varrho(\mathrm{s})f(\x,\mathrm{s}),\quad f(\x,0)=\Psi(f)(\x,0).
\]
\end{exampl}

\begin{exampl}[Transport equations on infinite networks]
As in Introduction consider the free transport on
$E^0=(0,1)\times V$, where $V$ is at most a countable subset of $\mathbb{R}_+$ and $\nu$ is the counting measure on $V$. We have
\[
 \Gamma^{-} =\{0\}\times V, \quad \Gamma^{+} =\{1\}\times V, \quad m^{-}(d\x,d\z)=\z\delta_{0} (d\x)\nu(d\z),\quad m^{+}(d\x,d\z)=\z\delta_{1} (d\x)\nu(d\z).
\]
We let $q(\x,\z)=0$ and
\[
\jump((1,\z),B)=\sum_{\z'\in V}\mathbf{1}_B(0,\z')p(\z,\z'),\quad \z\in V, B\subset V,
\]
where we assume that $(p(\z,\z'))_{\z,\z'\in V}$ is a stochastic transition matrix, i.e.
\[
p(\z,\z')\ge 0, \quad \sum_{\z'\in V}p(\z,\z')=1,\quad \z,\z'\in V.
\]
The operator $H\colon L^1(\Gamma^{+},m^{+})\to L^1(\Gamma^{-},m^{-})$ is thus of the form
\[
H(f_{\partial+})(0,\z)=\frac{1}{\z}\sum_{\z'\in V}f_{\partial+}(1,\z')p(\z',\z)\z'
\]
Note that we have
\[
t_{-}(\x,\z)=\frac{\x}{\z},\quad  t_{+}(\x,\z)=\frac{1-\x}{\z}, \quad \x\in [0,1], \z\in V\setminus\{0\}.
\]
Thus, if $V$ is bounded from above then $\inf\{t_{+}(0,\z):\z\in V\}>0$. Consequently, the induced substochastic semigroup is stochastic by Theorem~\ref{th:ptsmal} and Corollary~\ref{c:pert1} with generator being the operator $A_\Psi$
\[
A_{\Psi}f(\x,\z)=-\z\frac{\partial}{\partial \x}f(\x,\z)
\] with $f$ satisfying
\[
\z f(0,\z)=\sum_{\z'\in V}f(1,\z')p(\z',\z)\z'.
\]

Suppose now that $V$ is unbounded and that there exists a quasi-interior element
$\pi$ of $L^1(V,\nu)$ such that
\begin{equation}\label{e:positiverec}
\sum_{\z\in V}\pi(\z)p(\z,\z')\le \pi(\z'),\quad \z'\in V.
\end{equation}
Then
$f_{\partial+}(1,\z)=\pi(\z)/\z$, $\z\in V$, is a quasi interior element of $L^1(\Gamma^{+},m^{+})$ satisfying the assumptions of Corollary~\ref{c:perturb2} and implying that the induced substochastic semigroup is stochastic.
It should be noted that if the transition matrix  $(p(\z,\z'))_{\z,\z'\in V}$ is irreducible then the existence of a quasi-interior element $\pi$ of $L^1(V,\nu)$ satisfying \eqref{e:positiverec} is equivalent to the transition matrix to be positive recurrent, i.e. the transposed matrix is a stochastic operator on $L^1(V,\nu)$ and has a non-zero fixed point. This example can be interpreted as a flow on infinite networks, see \cite{dorn08,dorn10,banasiakfalkiewicz15}.
\end{exampl}

\begin{exampl}[Spatially inhomogeneous linear Boltzmann equations with boundary conditions]  
As in Introduction consider the free transport on
$E^0=\Omega\times V$.
We have
\[
 m^{\pm}(d\x,d\z)=\pm \z\cdot n(\x)\sigma (d\x)\nu(d\z) ,
\]
where $\sigma$ is the surface Lebesgue measure on the boundary $\partial \Omega$.
Let the collision kernel does not depend on $\x$. We  have
\[
Bf(\x,\z)=\int_V \kappa(\mathrm{v},\mathrm{v}')f(\mathrm{x},\mathrm{v}')\nu(d\mathrm{v}'),
\]
and $q$ depends only on $\z$. Let
$\Psi=H\Tr^+$, where $H\colon L^1(\Gamma^{+},m^{+})\to L^1(\Gamma^{-},m^{-})$ is a stochastic operator.
We assume that $\sigma(\partial\Omega)<\infty$, $q$ is strictly positive, and that
there exists a quasi-interior element $\mathcal{M}\in L^1( V, \nu)$ such that
\begin{equation}\label{d:maxwell}
B \mathcal{M}\le q\mathcal{M}, \quad H(\mathcal{M})\le\mathcal{M},
\end{equation}
with $f=q \mathcal{M}$ and $f_\partial=\mathcal{M}$ belonging to $L^1(\Omega\times V,m)$ and $ L^1(\Gamma^{-}, m^{-})$.
It follows from \eqref{d:R0p} that
\[
R_0(f,f_\partial)(\x,\z)\le \mathcal M(\z), \quad (\x,\z)\in (\Omega\times V)\cup \Gamma^{+}.
\]
Hence,
\[
BR_0(f,f_\partial)\le B\mathcal{M}\le f, \quad H(R_0(f,f_\partial))\le H(\mathcal{M})\le f_\partial,
\]
implying that condition \eqref{e:qiP01} holds and that the induced semigroup is stochastic, by Theorem~\ref{th:ptsmal} and Corollary~\ref{c:closure}.
Particular examples of collision kernels for which one can find a Maxwellian function
\[
\mathcal{M}(\z)=\frac{c}{(2\pi\theta)^{d/2}}e^{-\frac{|\z|^2}{2\theta}}
\]
with the above properties are to be found in linear Boltzmann equations with hard potentials  and angular cut-offs, see in particular \cite{lods09,lodsmustapha17}. This example can be extended to problems when the detailed balance condition holds for the kernel $\kappa$ and for the boundary operator, see \cite{pettersson1993weak}.
\end{exampl}

\section*{Acknowledgments}
The author would like to thank  referees for valuable comments which materially improved the presentation of the paper. This research 

%

\end{document}